\newtheorem{theorem}{Theorem}[section]
\newtheorem{lemma}[theorem]{Lemma}
\newtheorem{proposition}[theorem]{Proposition}
\newtheorem{corollary}[theorem]{Corollary}
\theoremstyle{definition}
\newtheorem{algorithm}[theorem]{Algorithm}
\newtheorem{definition}[theorem]{Definition}
\theoremstyle{remark}
\newtheorem{remark}[theorem]{Remark}
\newlength{\algindent}\settowidth{\algindent}{\textit{Output}:\hskip0.5em }
\newcounter{stepcount}
\newenvironment{alglist}
{\quad\begin{list}{\arabic{stepcount}.}%
{\leftmargin=\algindent\labelwidth=\algindent\itemsep=\smallskipamount\usecounter{stepcount}}}
\newcommand{\algin}{\item[\emph{Input}:]}
\newcommand{\algout}{\item[\emph{Output}:]}
\newenvironment{algsublist}
{\begin{enumerate}[label=({\alph*}), ref=({\alph*})]}
{\end{enumerate}}
\newenvironment{algsubsublist}
{\begin{enumerate}[label=({\roman*}), ref=({\roman*})]}
{\end{enumerate}}
\DeclareMathOperator{\Aut}{Aut}
\DeclareMathOperator{\Cross}{Cross}
\DeclareMathOperator{\Div}{div}
\DeclareMathOperator{\Gal}{Gal}
\DeclareMathOperator{\Norm}{Norm}
\DeclareMathOperator{\PGL}{PGL}
\DeclareMathOperator{\Proj}{Proj}
\DeclareMathOperator{\Supp}{Supp}
\newcommand{\FF}{{\mathbf F}}
\newcommand{\PP}{{\mathbf P}}
\newcommand{\CF}{{\mathcal F}}
\newcommand{\CL}{{\mathcal L}}
\newcommand{\CP}{{\mathcal P}}
\newcommand{\kbar}{\bar{k}}
\newcommand{\Fbar}{\overline{\FF}}
\newcommand{\Otilde}{\lowerwidetilde{O}}
\newcommand{\twobytwo}[4]{% For making 2x2 bracketed matrices in text.
\bigl[\begin{smallmatrix}#1&#2\\#3&#4\end{smallmatrix}\bigr]}
\newcommand\proj[2]{[#1\,{:}\,#2]}
\newcommand{\kstar}{k^\times}
\newcommand{\Rhom}{R_\textup{hom}}
\newcommand{\zz}{\phantom{.00}}             % Another hack for our tables.
\newcommand{\raisemath}[1]{\mathpalette{\raisem@th{#1}}}
\newcommand{\raisem@th}[3]{\raisebox{#1}{$#2#3$}}
\newcommand{\Pover}[1]{\PP^1\phantom{#1}\llap{$\raisemath{-0.4ex}{\scriptstyle/#1}$}}
\newcommand{\mybar}[1]{
  \mathchoice
  {#1\llap{$\overline{\phantom{\displaystyle\rm#1}}$}}
  {#1\llap{$\overline{\phantom{\textstyle\rm#1}}$}}
  {#1\llap{$\overline{\phantom{\scriptstyle\rm#1}}$}}
  {#1\llap{$\overline{\phantom{\scriptscriptstyle\rm#1}}$}}
}  
\renewcommand{\bar}{\mybar}
\DeclareMathSymbol{\widetildesym}{\mathord}{largesymbols}{"65}
\newcommand\lowerwidetildesym{%
  \text{\smash{\raisebox{-1.4ex}{%
    $\widetildesym$}}}}
        \newcommand\lowerwidetilde[1]{%
          \mathchoice
            {\accentset{\displaystyle\lowerwidetildesym}{#1}}
            {\accentset{\textstyle\lowerwidetildesym}{#1}}
            {\accentset{\scriptstyle\lowerwidetildesym}{#1}}
            {\accentset{\scriptscriptstyle\lowerwidetildesym}{#1}}
}
\begin{document}

\title[Enumerating places in quasilinear time]
      {Enumerating places of \texorpdfstring{$\mathbf P^1$}{P1} up to \\automorphisms of \texorpdfstring{$\mathbf P^1$}{P1} in quasilinear time}
\author{Everett W. Howe}
\address{Independent mathematician, 
         San Diego, CA 92104, USA}
\email{\href{mailto:however@alumni.caltech.edu}{however@alumni.caltech.edu}}
\urladdr{\href{http://ewhowe.com}{http://ewhowe.com}}

\date{21 October 2025}

\keywords{Projective line, place, finite field}

\subjclass{Primary 11G20; Secondary 11Y16, 14G15}
%%%%%%%%%%%%%%%%%%%%%%%%%%%%%%%%%%%%%%%%%%%%%%%%%%%%%%%%%%%%%%%%%%%%%%%%%%%%%%%%
%%                                                                            %%
%% (2020 Mathematics Subject Classification)                                  %%
%%                                                                            %%
%% 11     Number theory                                                       %%
%% 11G       Arithmetic algebraic geometry (Diophantine geometry)             %%
%% 11G20        Curves over finite and local fields                           %%
%% 11Y       Computational number theory                                      %%
%% 11Y16        Number-theoretic algorithms; complexity                       %%
%% 14     Algebraic geometry                                                  %%
%% 14G       Arithmetic problems in algebraic geometry; Diophantine geometry  %%
%% 14G15        Finite ground fields in algebraic geometry                    %%
%%                                                                            %%
%%%%%%%%%%%%%%%%%%%%%%%%%%%%%%%%%%%%%%%%%%%%%%%%%%%%%%%%%%%%%%%%%%%%%%%%%%%%%%%%

\begin{abstract}
We present an algorithm that, for every fixed degree $n\ge 3$, will enumerate
all degree-$n$ places of the projective line over a finite field $k$ up to the 
natural action of $\operatorname{PGL}_2(k)$ using $O(\log q)$ space and  
$\widetilde{O}(q^{n-3})$ time, where ${q=\#k}$. Since there are $\Theta(q^{n-3})$
orbits of $\operatorname{PGL}_2(k)$ acting on the set of degree\nobreakdash-$n$
places, the algorithm is quasilinear in the size of its output. The algorithm is
probabilistic unless we assume the extended Riemann hypothesis.

We also present an algorithm for enumerating orbit representatives for the 
action of $\operatorname{PGL}_2(k)$ on the degree-$n$ effective divisors of
$\mathbf{P}^1$ over finite fields~$k$. The two algorithms depend on one another;
our method of enumerating orbits of places of odd degree~$n$ depends on
enumerating orbits of effective divisors of degree~$(n+1)/2$.

As an application of the second algorithm, for $g=2$, $3$, and $4$ we implement
an algorithm in Magma that computes all hyperelliptic curves of genus $g$ over
finite fields $k$ using $O(q^{g-1})$ space and $\widetilde{O}(q^{2g-1})$ time,
where $q=\#k$. Our implementation runs $60$--$80$ times faster than existing
algorithms for computing genus-$2$ hyperelliptic curves, and about $280$ times
faster than existing algorithms for computing genus-$3$ hyperelliptic curves. We
know of no other implementations of algorithms to compute genus-$4$
hyperelliptic curves.
\end{abstract}

\maketitle
%\setcounter{tocdepth}{1}
%\tableofcontents

%%%%%%%%%%%%%%%%%%%%%%%%%%%%%%%%%%%%%%%%%%%%%%%%%%%%%%%%%%%%%%%%%%%%%%%%%%%%%%%%
%%%%%%%%%%%%%%%%%%%%%%%%%%%%%%%%%%%%%%%%%%%%%%%%%%%%%%%%%%%%%%%%%%%%%%%%%%%%%%%%
%%%%%%%%%%%%%%%%%%%%%%%%%%%%%%%%%%%%%%%%%%%%%%%%%%%%%%%%%%%%%%%%%%%%%%%%%%%%%%%%
\section{Introduction}
\label{S:intro}

The effective divisors of $\PP^1$ are a fundamental object in geometry, and 
there are situations in which one would like to enumerate all such divisors of a
given degree when the base field $k$ is finite, up to the action of 
$\Aut(\Pover{k})\cong\PGL_2(k)$. For instance, if one is interested in 
enumerating hyperelliptic curves of genus $g>1$ over~$\FF_q$, where $q$ is an 
odd prime power, then it is natural to look at effective divisors of degree
$2g+2$ in which no point has multiplicity larger than~$1$, up to the action 
of~$\PGL_2(\FF_q)$ (see~\cite[\S3.1]{MaisnerNart2002}, \cite[\S0]{Nart2009}).
More generally, if one is interested in covers of $\Pover{k}$ of a given degree
and genus up to isomorphism, a natural place to start is to group them by 
ramification divisor, up to the action of~$\PGL_2(k)$.

An argument that we give in Section~\ref{S:divisors} shows that to develop an 
algorithm to enumerate all degree-$n$ effective divisors of $\Pover{\FF_q}$ up 
to $\PGL_2(\FF_q)$ in quasilinear time, the central case to consider is that of
effective divisors consisting of a single place of degree~$n$. When $n\le 3$
this is trivial, and Theorems~1.2 and~3.8 of~\cite{Howe2024} tell us how to do 
this when $n=4$. Our main result, therefore, is the following:

\begin{theorem}
\label{T:places}
Fix an integer $n\ge 5.$ The algorithms we present in 
Sections~\textup{\ref{S:simpleodd}} and~\textup{\ref{S:simpleeven}} take as
input a prime power $q$ and output a complete set of unique representatives for
the orbits of $\PGL_2(\FF_q)$ acting on the degree-$n$ places of 
$\Pover{\FF_q}$. The algorithms take time $\Otilde(q^{n-3})$ and space
$O(\log q)$, and are probabilistic unless we assume the extended Riemann 
hypothesis.
\end{theorem}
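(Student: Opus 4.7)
The plan is to construct, for each parity of $n$, a canonical normal form for the $\PGL_2(\FF_q)$-orbit of a degree-$n$ place $P$, and then enumerate the set of normal forms directly, rather than enumerating all $\Theta(q^n)$ monic irreducible degree-$n$ polynomials and sorting them into orbits. Since $\PGL_2$ is three-dimensional and acts with generically trivial stabilizer on the $n$-parameter family of unordered $n$-tuples of points on $\PP^1$, the quotient has dimension $n-3$ and so there are $\Theta(q^{n-3})$ orbits over $\FF_q$; any enumeration hitting each orbit exactly once in $\Otilde(1)$ amortized time will meet the time target, provided the uniqueness test is intrinsic to each candidate, so that we avoid storing previously emitted representatives and keep the space bound at $O(\log q)$.

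For odd $n$, I would exploit the reduction advertised in the abstract: associate to each $P$ a canonical, $\PGL_2$-equivariant effective divisor $D(P)$ of degree $(n+1)/2$, built from some derived symmetric function of the geometric points of $P$ (for instance, the polar divisor of a suitable rational function, so that $\PGL_2$-equivariance is automatic). The outer loop runs over $\PGL_2(\FF_q)$-orbit representatives of degree-$(n+1)/2$ effective divisors, produced by the algorithm of Section~\ref{S:divisors}; for each such $D$ and each choice of auxiliary parameter, we recover a candidate place $P$ with $D(P) = D$ by factoring a polynomial over $\FF_q$ of degree at most $1 + ((n-1)/2)^n$. Balancing the outer and inner loop sizes yields $\Theta(q^{n-3})$ total iterations, each requiring one bounded-degree factorization, for total runtime $\Otilde(q^{n-3})$.

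For even $n$ the odd-degree construction does not transfer verbatim, so a different normalization is needed---perhaps a canonical pairing of the $n$ geometric points into $n/2$ unordered pairs (equivalently, a degree-$n/2$ auxiliary divisor), or a direct coordinatization that sends three landmarks canonically attached to $P$ to $0$, $1$, $\infty$. In either case, uniqueness of representatives would be enforced by a canonical selection rule---for example, lexicographic minimization within the finite set $\PGL_2(\FF_q)\cdot P$ under an explicit ordering---that can be evaluated on each candidate in $\mathrm{polylog}(q)$ time from local data alone, without access to previously emitted orbits.

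The main obstacle I anticipate is constructing the map $P \mapsto D(P)$ for odd $n$ in a way that simultaneously makes $\PGL_2$-equivariance manifest, bounds the fibers by a polynomial of the claimed degree $1 + ((n-1)/2)^n$, and permits an explicit inverse that recovers $P$ from $D$ together with a single root. A secondary difficulty is that orbits with nontrivial stabilizer, though of lower-order count, still force the canonical selection rule to break ties intrinsically, and probably require separate bookkeeping. Once these points are settled, the time and space analysis is a routine sum of per-candidate costs, and the extended Riemann hypothesis appears only through the polynomial-factorization subroutine, which is deterministic subexponential only under GRH.
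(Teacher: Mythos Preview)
Your outline for odd $n$ has the right architecture---an equivariant map from degree-$n$ places to effective divisors of degree at most $(n+1)/2$, an outer loop over orbit representatives of such divisors, and an inner step that recovers the fibre by factoring a bounded-degree polynomial---but you have correctly identified, and not resolved, the central gap: you do not actually construct $D(P)$. The paper's key idea is the \emph{Frobenius function}: for a root $\alpha$ of the degree-$n$ place, the $n+1$ monomials $1,\alpha,\ldots,\alpha^{(n-1)/2},\alpha^q,\alpha^{q+1},\ldots,\alpha^{q+(n-1)/2}$ are $\FF_q$-linearly dependent, so there is a unique rational function $F$ of degree at most $r=(n-1)/2$ with $F(\alpha)=\alpha^q$. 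The Frobenius divisor $D(P)$ is then the divisor of fixed points of $F$, which has degree at most $r+1$; the $\PGL_2$-equivariance is $F\mapsto \Gamma F\Gamma^{-1}$. The polynomial of degree $1+r^n$ that you quote from the abstract is precisely the fixed-point polynomial of the $n$-fold iterate $F^{(n)}$, whose degree-$n$ factors contain all places with Frobenius function $F$. Without this construction, your phrase ``some derived symmetric function of the geometric points'' is a placeholder, not a proof; there is no evident symmetric-function recipe that produces a degree-$(n+1)/2$ divisor with bounded fibres. You would also need to handle separately the case $\deg F=1$ (Frobenius divisor of degree~$2$), which the paper does explicitly via the element orders in $\PGL_2(\FF_q)$.

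For even $n$ your proposal is further from the mark. The paper does not use a pairing or a three-point normalization; instead it observes that a degree-$n$ place over $\FF_q$ splits over $\FF_{q^2}$ into two Galois-conjugate degree-$n/2$ places, applies the degree-$n/2$ algorithm over $\FF_{q^2}$ recursively, and then refines each $\PGL_2(\FF_{q^2})$-orbit into $\PGL_2(\FF_q)$-orbits by running over an explicit list of coset representatives for $\PGL_2(\FF_q)$ in $\PGL_2(\FF_{q^2})$, deduplicating via an orbit-label function and the cross polynomial. Your ``three landmarks to $0,1,\infty$'' idea cannot work directly because a degree-$n$ place has no $\FF_q$-rational points to serve as landmarks, and a lexicographic-minimum rule over the full orbit $\PGL_2(\FF_q)\cdot P$ would cost $\Theta(q^3)$ per candidate, overshooting the budget unless you first cut the orbit down as the paper does.
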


\begin{corollary}
\label{C:divisors}
Fix an integer $n\ge 3.$ The algorithm we present in 
Section~\textup{\ref{S:divisors}} takes as input a prime power $q$ and outputs a
complete set of unique representatives for the orbits of $\PGL_2(\FF_q)$ acting 
on the effective divisors of $\Pover{\FF_q}$ of degree~$n$. The algorithm takes
time $\Otilde(q^{n-3})$ and space $O(\log q)$, and is probabilistic unless we
assume the extended Riemann hypothesis.
\end{corollary}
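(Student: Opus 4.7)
The plan is to reduce the enumeration of degree-$n$ effective divisor orbits to the place enumeration of Theorem~\ref{T:places}, together with an elementary case analysis on the shape of the divisor. First I would stratify by type: for an effective divisor $D=\sum_i m_i P_i$ written with distinct places $P_i$ of degree $d_i$ and multiplicities $m_i\ge 1$ satisfying $\sum m_i d_i = n$, the \emph{type} is the multiset $\{(m_i,d_i)\}$. Types are $\PGL_2(\FF_q)$-invariant, and for fixed $n$ there are only $O_n(1)$ of them; hence it suffices to enumerate orbits within each type in time $\Otilde(q^{n-3})$ and space $O(\log q)$.

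The type ``a single place of degree $n$ with multiplicity~$1$'' is exactly the output of Theorem~\ref{T:places} for $n\ge 5$, and is handled by the earlier results cited in the introduction for $n\le 4$. The type ``$m$ copies of a single place of degree $n/m$'' with $m\ge 2$ is in bijection with the orbits of places of degree $n/m$, of which there are $\Theta(q^{n/m-3})$ --- well below budget --- and which are enumerated by the same algorithm invoked at smaller degree. In all other types, $D$ has at least two distinct places in its support.

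For these remaining multi-place types, the approach is to use $\PGL_2(\FF_q)$ to normalize a canonically chosen piece of the support (for example, a designated ``first place'' under a total ordering on places by $(m_i,d_i)$ with a lexicographic tiebreaker on minimal polynomials), and then to stream through the complementary data. Since $\#\PGL_2(\FF_q)=\Theta(q^3)$ and divisors of any such type have generically trivial stabilizer, the number of orbits of the type is $\Theta(q^{n-3})$, matching the budget; the normalized first piece ranges over a $\PGL_2(\FF_q)$-transversal of size $O(q^{d_1-3+c})$ for some small $c$, and for each choice the residue $D-m_1P_1$ ranges over $\Otilde(q^{n-m_1 d_1})$ options constrained only by disjointness from $P_1$, all enumerable in a streaming fashion.

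The main obstacle is to make the normalization in each multi-place type both \emph{unique} (no double-counting under the residual stabilizer of the normalized first piece) and \emph{enumerable in logarithmic space} (so that the orbit list is produced as a stream rather than stored). This is handled case by case for each type: depending on whether the first piece is a degree-$1$, degree-$2$, or higher-degree place, one quotients by translation, the affine stabilizer of a rational point, or the full $\PGL_2$-normalization provided by Theorem~\ref{T:places}, and verifies on the nose that distinct normalized pairs $(P_1,\,D-m_1P_1)$ lie in distinct $\PGL_2(\FF_q)$-orbits. Because the finitely many types form a bounded case analysis depending only on $n$, and each individual case costs $\Otilde(q^{n-3})$ time, the totals combine to give the claimed bound.
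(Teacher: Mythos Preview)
Your high-level plan --- stratify by the combinatorial shape of the divisor, normalize a chosen piece using the place enumeration of Theorem~\ref{T:places}, and stream through the rest --- matches the paper's strategy in Section~\ref{S:divisors}. But there is a genuine gap in the case where every place in the support has degree at most~$2$. If the designated first place $P_1$ has degree~$1$, you can move it to~$\infty$, but the residual $\PGL_2(\FF_q)$-stabilizer is the full affine group of size $q(q-1)$. The residue $D - m_1 P_1$ then ranges over $\Theta(q^{n - m_1})$ choices, and streaming through all of them costs $\Theta(q^{n-1})$ time when $m_1=1$, two powers of~$q$ over budget. Your phrase ``quotients by translation, the affine stabilizer of a rational point'' does not explain how to carry out this quotient within the time bound, and normalizing a single degree-$1$ (or degree-$2$) place cannot by itself suffice. (Separately, the tiebreaker ``on minimal polynomials'' is not $\PGL_2$-invariant, so it does not pick out a canonical first place at the level of orbits.)

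The paper's resolution is to normalize not one place but enough places that the residual stabilizer has size $O_n(1)$: a single place of degree~$\ge 3$ (stabilizer bounded via the cross polynomial, Theorem~\ref{T:cross}), or a \emph{pair} of distinct degree-$2$ places (using the invariant $\lambda$ of \S\ref{SS:orbits22}; there are $O(q)$ orbits of such pairs and each has bounded stabilizer), or a \emph{triple} of degree-$1$ places sent to $\infty,0,1$ (trivial stabilizer). Five small Galois types not covered by these three patterns are handled by hand. With the stabilizer bounded, the deduplication step becomes a check over $O_n(1)$ elements of $\PGL_2(\FF_q)$ per candidate divisor, and the $\Otilde(q^{n-3})$ bound follows. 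Your proposal would go through once this idea is incorporated; as written, the small-degree branches are under-specified and would not meet the stated time bound.
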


(Here and throughout the paper, by a ``complete set of unique representatives''
for a group acting on a set we mean a set of orbit representatives that contains 
exactly one element from each orbit.)

Since up to $\PGL_2(\FF_q)$ there are roughly $q^{n-3}/n$ degree-$n$ places of
$\PP^1$ and roughly $q^{n-3}$ effective divisors of degree~$n$, we see that both
algorithms take time quasilinear in the size of their output.

Almost all of the ingredients of our algorithms already appear
in~\cite{Howe2024}, which focuses on enumerating a complete set of unique 
representatives for the action of $\PGL_2$ on the effective divisors of $\PP^1$
of even degree $n$ that do not include any places with multiplicity greater
than~$1$. However, the restriction to even $n$ (or, more generally, 
composite~$n$) was critical to the approach in~\cite{Howe2024}. Furthermore, the
algorithms in that paper do not attempt to minimize their memory usage.

The main new idea in this paper is a method for enumerating $\PGL_2$ orbits of
places of odd degree. Given an odd $n\ge 3$, we show how to assign to every
degree-$n$ place $P$ of $\Pover{\FF_q}$ a rational function on $\Pover{\FF_q}$
of degree at most $(n-1)/2$ that reflects the action of Frobenius on the
geometric points of $P$; we call this the \emph{Frobenius function} for~$P$. If 
$F$ is the Frobenius function for~$P$, then its divisor $D$ of fixed points is 
effective and has degree at most $(n+1)/2$; we call $D$ the 
\emph{Frobenius divisor} of~$P$. The map that sends a degree-$n$ place $P$ to 
its Frobenius divisor is equivariant with respect to the action of 
$\PGL_2(\FF_q)$ on places and divisors. Also, there is a straightforward way to
enumerate all degree-$n$ places with a given Frobenius divisor. Thus, to 
enumerate all degree-$n$ places up to the action of $\PGL_2(\FF_q)$, we simply
enumerate orbit representatives for the action of $\PGL_2(\FF_q)$ on effective
divisors of degree at most $(n+1)/2$, and then find all degree-$n$ places whose
Frobenius divisors are among these orbit representatives. We can compute the
orbit representatives of effective divisors of degree at most $(n+1)/2$ by 
recursively applying our algorithm --- or by other means, since any algorithm
that takes time at most $\Otilde(q^{n-3})$ and space at most $O(\log q)$ will
suffice.

The critical step in computing the places with a given Frobenius divisor 
involves finding the degree-$n$ factors of a polynomial of degree bounded by
$1 + ((n-1)/2)^n$, so the complexity of our algorithm is essentially tied to 
that of the polynomial factorization algorithm we use. Under the extended 
Riemann hypothesis, we can deterministically factor polynomials in $\FF_q[x]$ of
bounded degree in time polynomial in $\log q$ 
(see~\cite{Ronyai1992}, \cite{Evdokimov1994}); unconditionally, there are 
probabilistic algorithms that factor in polynomial time
(see~\cite{Berlekamp1970}, \cite{CantorZassenhaus1981}, 
\cite{vonzurGathenShoup1992}, \cite{KaltofenShoup1998}, 
\cite{KedlayaUmans2011}). Thus, a probabilistic version of our algorithm takes
time $\Otilde(q^{n-3})$, and a deterministic version has the same complexity if
we assume the extended Riemann hypothesis.

In Section~\ref{S:ingredients} we present the main ingredients of our argument,
most notably the idea of the Frobenius function of a place of odd degree, as
well as how such rational functions are related to their divisors of fixed 
points. In Section~\ref{S:algorithm} we observe that a low-memory algorithm to 
output a finite list can be converted into a low-memory algorithm that takes one
element of the list (plus some extra data) as input and outputs the next element
(plus the necessary extra data). This can be done in such a way that the time to
create the whole list by repeatedly running the second algorithm is essentially
the same as that of running the first algorithm once. This observation
simplifies our exposition of our main algorithms.

In Section~\ref{S:structure} we outline the recursive nature of our algorithms
and explain how the arguments in the following sections piece together to
create a proof of Theorem~\ref{T:places}. In Section~\ref{S:divisors} we show 
how to enumerate $\PGL_2(\FF_q)$ orbits of effective divisors of degree~$n$ if
we can enumerate the $\PGL_2(\FF_q)$ orbits of places of degree at most~$n$, 
which shows that Corollary~\ref{C:divisors} follows from Theorem~\ref{T:places}.
In Section~\ref{S:simpleodd} we present an algorithm to enumerate orbit
representatives for $\PGL_2(\FF_q)$ acting on places of odd degree by using the
connection between a place and its Frobenius divisor. In 
Section~\ref{S:simpleeven} we show how to handle places of even degree $n$ over 
$\FF_q$ by combining our algorithm for places of degree $n/2$ over $\FF_{q^2}$
with an explicit list of coset representatives for $\PGL_2(\FF_q)$ in
$\PGL_2(\FF_{q^2})$. Finally, in Section~\ref{S:implementation} we give timings 
for some Magma programs for enumerating hyperelliptic curves of genus at most
$4$ based on the ideas in this paper.

%%%%%%%%%%%%%%%%%%%%%%%%%%%%%%%%%%%%%%%%%%%%%%%%%%%%%%%%%%%%%%%%%%%%%%%%%%%%%%%%
%%%%%%%%%%%%%%%%%%%%%%%%%%%%%%%%%%%%%%%%%%%%%%%%%%%%%%%%%%%%%%%%%%%%%%%%%%%%%%%%
%%%%%%%%%%%%%%%%%%%%%%%%%%%%%%%%%%%%%%%%%%%%%%%%%%%%%%%%%%%%%%%%%%%%%%%%%%%%%%%%
\section{Mise en place}
\label{S:ingredients}

In this section we lay out the ingredients necessary to prove our main results. 
We start by defining the basic objects of study, and then in separate 
subsections we present various results needed for our later arguments.

Let $k$ be a finite field and let $R\colonequals k[x,y]$. We view $R$ as a
graded ring, graded by degree, and we view $\Pover{k}$ as $\Proj R$. For each
$n$ let $R_{n}$ be the set of homogeneous polynomials in $R$ of degree $n$, and
let $\Rhom$ be the union of the $R_{n}$. We say that $f(x,y)\in \Rhom$ is
\emph{monic} if $f(x,1)$ is monic as a univariate polynomial. We say that
$\alpha\in\kbar$ is a \emph{root} of $f$ if $f(\alpha,1) = 0$.

There is a natural action of the group $\PGL_2(k)$ on $\PP^1(\kbar)$: If
$\Gamma\in\PGL_2(k)$ is represented by a matrix $\twobytwo{a}{b}{c}{d}$ and if
$P\colonequals\proj{\alpha}{\beta}\in\PP^1(\kbar)$, then we define
\[
\Gamma(P)\colonequals\proj{a\alpha+b\beta}{c\alpha+d\beta}.
\]
This action extends to an action on divisors on $\Pover{k}$.

There is a related action of $\PGL_2(k)$ on the set $\Rhom/\kstar$, such that 
$\Gamma$ sends the class $[f]$ of a homogeneous polynomial $f(x,y)$ to the class
of ${f(dx-by,-cx+ay)}$. Since every element of $\Rhom/\kstar$ has a unique monic
representative, we can also view $\PGL_2(k)$ as acting on the set of homogeneous
polynomials; given a homogenous~$f$, we let $\Gamma(f)$ be the unique monic $f'$
such that $[f']=\Gamma([f])$. Let $\Div$ be the function that sends a homogenous
polynomial to its divisor. Then we have 
\begin{equation}
\label{EQ:divgamma=gammadiv}
\Div\Gamma(f) =\Gamma(\Div f).
\end{equation}

Our goal is to produce, for each $n>3$, an algorithm that will create a complete
set of unique representatives for the action of $\PGL_2(k)$ on the degree-$n$ 
effective divisors of $\Pover{k}$, in time $\Otilde(q^{n-3})$, where $q = \#k$.
Since effective divisors on $\Pover{k}$ correspond to monic elements of~$\Rhom$,
our goal can also be viewed as finding orbit representatives for $\PGL_2(k)$
acting on the monic elements of $R_{n}$.

%%%%%%%%%%%%%%%%%%%%%%%%%%%%%%%%%%%%%%%%%%%%%%%%%%%%%%%%%%%%%%%%%%%%%%%%%%%%%%%%
%%%%%%%%%%%%%%%%%%%%%%%%%%%%%%%%%%%%%%%%%%%%%%%%%%%%%%%%%%%%%%%%%%%%%%%%%%%%%%%%
\subsection{Frobenius functions and Frobenius divisors}
\label{SS:Frobenius}
In this subsection we introduce the concepts of Frobenius functions and
Frobenius divisors and prove some basic results about them.

\begin{theorem}
\label{T:Frobeniusfunction}
Let $f\in k[x,y]$ be an irreducible homogeneous polynomial of degree~$n$, where
$n\ge 3$ is odd. Among the rational functions $\PP^1\to\PP^1$ of degree at most
$(n-1)/2$, there is a unique function $F$ such that 
$\proj{\alpha^{\#k}}{1} = F(\proj{\alpha}{1})$ for every root $\alpha$ of $f$ 
in~$\kbar$. 
\end{theorem}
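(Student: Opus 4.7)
The plan is to establish existence by a dimension count and uniqueness by a degree count, with everything taking place in affine coordinates. Let $d := (n-1)/2$. Since $f$ is irreducible of degree $n\ge 3$, the variable $y$ does not divide $f$, so $g(x) := f(x,1) \in k[x]$ has degree exactly $n$; because finite fields are perfect, $g$ is separable, hence has $n$ distinct roots $\alpha_1,\dots,\alpha_n \in \kbar$. The geometric Frobenius $\sigma\colon t\mapsto t^q$ (with $q=\#k$) permutes these roots transitively, and $\infty$ is not among them, so affine coordinates suffice throughout.

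For existence, I would look for $F$ of the form $P/Q$ with $P,Q \in k[x]$ of degree at most $d$. The single constraint $P(\alpha_1) = \alpha_1^q\, Q(\alpha_1)$ is a $k$-linear map from the coefficient space, of dimension $2(d+1) = n+1$, into the $n$-dimensional $k$-vector space $k(\alpha_1)$. Since $n+1 > n$, its kernel is nontrivial; pick any nonzero $(P,Q)$ there. To confirm that $F := P/Q$ is well-defined and has the correct value at $\alpha_1$, I would verify that $Q(\alpha_1) \ne 0$: if it vanished, then $P(\alpha_1)$ would vanish too, so the minimal polynomial $g$ would divide both $P$ and $Q$; but $\deg P,\deg Q \le d < n$, forcing $P = Q = 0$, a contradiction. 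Thus $F$ is a well-defined rational function over $k$ of degree at most $d$ with $F(\alpha_1) = \alpha_1^q$, and applying powers of $\sigma$ (which fix the coefficients of $P$ and $Q$) extends the equality to every $\alpha_i$.

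For uniqueness, suppose $F_1 = P_1/Q_1$ and $F_2 = P_2/Q_2$ both satisfy the hypothesis. Then the polynomial $P_1 Q_2 - P_2 Q_1 \in k[x]$ has degree at most $2d = n-1$ yet vanishes at all $n$ distinct roots $\alpha_i$, so it is identically zero; hence $F_1 = F_2$ as rational functions.

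The one point requiring genuine care -- the closest thing to an obstacle -- is the verification that the pair $(P,Q)$ produced by the dimension count yields an honest rational function, that is, that $Q(\alpha_1) \ne 0$. This step is what rules out spurious ``solutions'' in which $F$ is ill-defined at the roots of $f$. Everything else is standard linear algebra and polynomial-degree bookkeeping; in particular, the equality $2(d+1) = n+1$, which makes the argument go through only for odd~$n$, is precisely the reason odd degree is singled out in the statement.
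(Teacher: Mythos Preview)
Your proof is correct and follows essentially the same approach as the paper: the existence argument via a dimension count (your linear map $(P,Q)\mapsto P(\alpha_1)-\alpha_1^q Q(\alpha_1)$ into the $n$-dimensional space $k(\alpha_1)$) is exactly the paper's observation that the $n+1$ elements $1,\alpha,\ldots,\alpha^r,\alpha^q,\ldots,\alpha^{q+r}$ of $\FF_{q^n}$ admit a nontrivial $\FF_q$-linear relation, and both the non-degeneracy check and the uniqueness argument are the same degree bound you give.
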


\begin{definition}
Given $f$ and $F$ as in the theorem, we say that $F$ is the 
\emph{Frobenius function} for $f$. If $P$ is the degree-$n$ place corresponding
to $f$, we also say that $F$ is the Frobenius function for~$P$.
\end{definition}

\begin{proof}[Proof of Theorem~\textup{\ref{T:Frobeniusfunction}}]
Let $q = \# k$, let $r = (n-1)/2$, and let $\alpha\in\FF_{q^n}$ be a root 
of~$f$. Then 
\[
       1,       \alpha, \alpha^2, \ldots, \alpha^r, 
\alpha^q, \alpha^{q+1},           \ldots, \alpha^{q+r}
\]
is a list of $2r+2 = n+1$ elements of $\FF_{q^n}$, so there is an $\FF_q$-linear
relation among these elements, say
\[
a_0 + a_1\alpha + \cdots + a_r\alpha^r 
    = b_0\alpha^q + b_1\alpha^{q+1} + \cdots + b_r\alpha^{q+r}\,.
\]
We know that the two sides of this equality are nonzero, because otherwise
$\alpha$ would be a root of a polynomial of degree smaller than~$n$. Therefore 
we can write
\[
\alpha^q = \frac{a_r\alpha^r + \cdots + a_1\alpha + a_0}
                {b_r\alpha^r + \cdots + b_1\alpha + b_0}\,.
\]
Let $F$ be the rational function
\[
F \colonequals \frac{a_r x^r + \cdots + a_1 x y^{r-1} + a_0 y^r}
                    {b_r x^r + \cdots + b_1 x y^{r-1} + b_0 y^r}\,.
\]
Then $\proj{\alpha^q}{1} = F(\proj{\alpha}{1})$, and the same holds if we
replace $\alpha$ by any of its conjugates. Also, $F$ is a rational function of
degree at most~$r$ (``at most'' because there may be common factors in the 
numerator and denominator that cancel one another). This proves the existence
claim of the theorem.

Suppose $F_1$ and $F_2$ are two rational functions of degree at most~$r$ such 
that $\proj{\alpha^{q}}{1} = F_i(\proj{\alpha}{1})$ for $i=1$ and $i=2$. Write
$F_i = g_i/h_i$ for homogeneous polynomials $g_i$ and $h_i$ of degree at
most~$r$. Then since $F_1(\proj{\alpha}{1}) = F_2(\proj{\alpha}{1})$, we see
that $\proj{\alpha}{1}$ is zero of the homogeneous polynomial $g_1h_2 - g_2h_1$,
whose degree is at most $2r < n$. Since $\alpha$ generates a degree-$n$ 
extension of $\FF_q$, this is impossible unless $g_1h_2 = g_2h_1$; that is,
unless $F_1 = F_2$. This proves the uniqueness claim of the theorem.
\end{proof}

\begin{definition}
Let $F$ be a rational function of degree $r$ on $\Pover{k}$ and write $F = g/h$ 
for coprime homogenous polynomials $g$ and $h$ in $R_{r}$. If $F\ne x/y$, we 
define the \emph{divisor of fixed points} for $F$ to be the divisor
$\Phi(F)\colonequals \Div (xh-yg)$ of degree~$r+1$. If $f$ is an irreducible
homogeneous polynomial whose Frobenius function is~$F$, then we call $\Phi(F)$
the \emph{Frobenius divisor} for~$f$.
\end{definition}

\begin{lemma}
\label{L:PGLequivariance}
Let $f$ be a monic irreducible homogeneous polynomial in $R$ of odd degree 
$n\ge 3$ and let $F$ be the Frobenius function for $f$. For every
$\Gamma\in\PGL_2(k)$, the rational function $\Gamma\circ F\circ \Gamma^{-1}$ is
the Frobenius function for $\Gamma(f)$, and
\begin{equation}
\label{EQ:gammaphi}
\Phi(\Gamma\circ F\circ \Gamma^{-1}) = \Gamma(\Phi(F))\,.
\end{equation}
\end{lemma}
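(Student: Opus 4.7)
The plan is to handle the two assertions separately, with the first done via the uniqueness in Theorem~\ref{T:Frobeniusfunction} and the second via the polynomial formula in~\eqref{EQ:divgamma=gammadiv}.

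For the first assertion, I would observe that since $\Gamma$ is defined over $k$, its coordinate formulas commute with the $\#k$-power Frobenius. Let $q = \#k$ and write $\mathrm{Frob}(\proj{\alpha}{1}) = \proj{\alpha^q}{1}$. The roots of $\Gamma(f)$ in $\PP^1(\kbar)$ are precisely the points $\Gamma(P)$ for $P$ a root of $f$, by~\eqref{EQ:divgamma=gammadiv}. For any such root $P$,
\[
(\Gamma\circ F\circ \Gamma^{-1})(\Gamma(P))
   = \Gamma(F(P))
   = \Gamma(\mathrm{Frob}(P))
   = \mathrm{Frob}(\Gamma(P))\,.
\]
Composing a degree-$r$ rational function on $\PP^1$ with an automorphism of $\PP^1$ on either side gives a rational function of the same degree, so $\Gamma\circ F\circ\Gamma^{-1}$ still has degree at most $(n-1)/2$. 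The uniqueness clause of Theorem~\ref{T:Frobeniusfunction} then forces $\Gamma\circ F\circ\Gamma^{-1}$ to be the Frobenius function for $\Gamma(f)$.

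For the second assertion, I would reduce everything to a single polynomial identity. Write $F=g/h$ with $g,h\in R_r$ coprime, so that $\Phi(F)=\Div(xh-yg)$. With $\Gamma$ represented by $\twobytwo{a}{b}{c}{d}$, a direct computation from the definition of the $\PGL_2$-action gives
\[
\Gamma\circ F\circ\Gamma^{-1} = \frac{a g'+b h'}{c g'+d h'}\,,\quad\text{where } g'=g(dx-by,-cx+ay),\ h'=h(dx-by,-cx+ay).
\]
Since the transformation $(g',h')\mapsto (ag'+bh',\,cg'+dh')$ is given by an invertible matrix, the numerator and denominator remain coprime, so the divisor of fixed points is the divisor of
\[
x(c g'+d h') - y(a g'+b h') = (dx-by)\,h' + (cx-ay)\,(-g')\,.
\]
On the other hand, substituting $(x,y)\mapsto(dx-by,-cx+ay)$ into $xh-yg$ produces exactly the same expression. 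Thus the fixed-point polynomial for $\Gamma\circ F\circ \Gamma^{-1}$ is the image of $xh-yg$ under the action of $\Gamma$, up to a scalar. Applying \eqref{EQ:divgamma=gammadiv} yields
\[
\Phi(\Gamma\circ F\circ\Gamma^{-1}) = \Div(\Gamma(xh-yg)) = \Gamma(\Div(xh-yg)) = \Gamma(\Phi(F))\,.
\]

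The only place requiring attention is checking that the substitution above really is the action of $\Gamma$ on $xh-yg$ (rather than the action of~$\Gamma^{-1}$); this is a bookkeeping issue about the paper's convention that $\Gamma$ acts on homogeneous polynomials by $f\mapsto f(dx-by,-cx+ay)$, which is precisely the substitution produced by the computation. Once that sign/inverse check is done, no further work is needed.
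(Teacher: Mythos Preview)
Your proof is correct and follows essentially the same route as the paper: verify the Frobenius-function claim by checking the defining property on roots (implicitly using the uniqueness in Theorem~\ref{T:Frobeniusfunction}), and then compute the fixed-point polynomial of $\Gamma\circ F\circ\Gamma^{-1}$ explicitly and recognize it as $\Gamma$ applied to $xh-yg$, invoking~\eqref{EQ:divgamma=gammadiv}. One small slip: in your displayed identity the term should be $(cx-ay)\,g'$, not $(cx-ay)\,(-g')$; with that correction both sides indeed equal $(dx-by)h'+(cx-ay)g'$, and the rest goes through.
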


\begin{proof}
If $\beta$ is a root of $\Gamma(f)$ then we have 
$\proj{\beta}{1} = \Gamma(\proj{\alpha}{1})$ for a root $\alpha$ of~$f$. We 
check that
\begin{align*}
\proj{\beta^{\#k}}{1} 
&= \Gamma(\proj{\alpha^{\#k}}{1}) \\
&= \Gamma(F(\proj{\alpha}{1})) \\
&= (\Gamma\circ F)(\proj{\alpha}{1}) \\
&= (\Gamma\circ F\circ\Gamma^{-1})(\Gamma(\proj{\alpha}{1}))\\
&= (\Gamma\circ F\circ\Gamma^{-1})(\proj{\beta}{1})\,,
\end{align*}
so $\Gamma\circ F\circ \Gamma^{-1}$ is the Frobenius function for $\Gamma(f)$.
Suppose $\twobytwo{a}{b}{c}{d}$ is a matrix that represents $\Gamma$, and
suppose $g$ and $h$ are coprime homogenous polynomials such that $F = g/h$.
Using the definition of $\Phi$, we find that 
$\Phi(\Gamma\circ F\circ \Gamma^{-1})$ is the divisor of the polynomial 
\[
x\cdot [cg(x',y') + dh(x',y')] - y \cdot [ag(x',y') + bh(x',y')]\,,
\]
where $x' = dx-by$ and $y'=-cx+ay$. But this simplifies to
\[
x'\cdot h(x',y') - y'\cdot g(x',y')\,,
\]
and the class of this polynomial in $\Rhom/\kstar$ is nothing other than 
$\Gamma([xh-yg])$. By~\eqref{EQ:divgamma=gammadiv}, we see 
that~\eqref{EQ:gammaphi} holds.
\end{proof}

\begin{corollary}
\label{C:chosenfixedpoints}
Let $n = 2r+1\ge 3$ be an odd integer and let $T$ be a set of representatives
for the orbits of $\PGL_2(k)$ acting on the effective divisors of $\Pover{k}$ of
degree at most~$r+1$. Suppose $f$ is a monic irreducible homogenous polynomial
of degree~$n$. Then there is an $f'$ in the $\PGL_2(k)$ orbit of $f$ such that 
the Frobenius divisor of $f'$ lies in~$T$. \qed
\end{corollary}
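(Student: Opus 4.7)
The plan is to apply Lemma~\ref{L:PGLequivariance} directly, using the fact that $T$ is defined to hit every orbit.

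First I would unwind the definitions: since $f$ is monic irreducible of odd degree $n = 2r+1 \ge 3$, Theorem~\ref{T:Frobeniusfunction} gives a Frobenius function $F$ for $f$ of degree at most $r$. By the definition of the Frobenius divisor, $\Phi(F)$ is the divisor of a homogeneous polynomial of degree $r+1$ (namely $xh - yg$ where $F = g/h$), so $\Phi(F)$ is an effective divisor of $\Pover{k}$ of degree at most $r+1$. Note one has to check $F \ne x/y$ so that $\Phi(F)$ is defined; but if $F = x/y$ then every root $\alpha$ of $f$ would satisfy $\alpha^{\#k} = \alpha$, forcing $\alpha \in k$, contradicting that $f$ is irreducible of degree $n \ge 3$.

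Next, since $T$ contains a representative from every $\PGL_2(k)$-orbit of effective divisors of degree at most $r+1$, there exists some $\Gamma \in \PGL_2(k)$ such that $\Gamma(\Phi(F)) \in T$. Set $f' \colonequals \Gamma(f)$; this lies in the $\PGL_2(k)$-orbit of $f$, and by Lemma~\ref{L:PGLequivariance} the Frobenius function for $f'$ is $\Gamma \circ F \circ \Gamma^{-1}$, whose divisor of fixed points is
\[
\Phi(\Gamma \circ F \circ \Gamma^{-1}) \;=\; \Gamma(\Phi(F)) \;\in\; T
\]
by equation~\eqref{EQ:gammaphi}. Hence the Frobenius divisor of $f'$ lies in $T$, as required.

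There is no real obstacle here; the corollary is essentially a repackaging of the $\PGL_2(k)$-equivariance established in Lemma~\ref{L:PGLequivariance}. The only small wrinkle is verifying that the Frobenius divisor is genuinely defined (i.e.\ that $F \ne x/y$), which follows immediately from irreducibility of $f$ together with $n \ge 3$.
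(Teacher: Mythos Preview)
Your proof is correct and matches the paper's intended approach: the paper simply marks the corollary with \qed, indicating it follows immediately from Lemma~\ref{L:PGLequivariance}, which is precisely what you do. Your extra care in verifying $F \ne x/y$ (so that $\Phi(F)$ is defined) is a nice detail the paper leaves implicit.
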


%%%%%%%%%%%%%%%%%%%%%%%%%%%%%%%%%%%%%%%%%%%%%%%%%%%%%%%%%%%%%%%%%%%%%%%%%%%%%%%%
%%%%%%%%%%%%%%%%%%%%%%%%%%%%%%%%%%%%%%%%%%%%%%%%%%%%%%%%%%%%%%%%%%%%%%%%%%%%%%%%
\subsection{Rational functions with a given divisor of fixed points}
\label{SS:fixedpoints}

The following proposition tells us how to find all rational functions with a
given divisor of fixed points.

\begin{proposition}
\label{P:fixedpoints}
Let $D$ be an effective divisor on $\PP^1$ of degree $r+1\ge 2$ and let $p$ be
the unique monic homogeneous polynomial of degree $r+1$ with $\Div p = D$.
\begin{itemize}
\item Suppose $\infty$ is not in the support of $D$, so that $p$ is not 
      divisible by $y$. Then the set of degree-$r$ rational functions $F$ with 
      $\Phi(F) = D$ is equal to the set 
      \[ 
      \Bigl\{\frac{xh-p}{yh}\Bigr\}\,,
      \] 
      where $h$ ranges over the set of monic polynomials in $R_{r}$ that are 
      coprime to $yp$.
\item Suppose $\infty$ is in the support of $D$, so that $p$ is a multiple 
      of~$y$. Then the set of degree-$r$ rational functions $F$ with 
      $\Phi(F) = D$ is equal to the set 
      \[ 
      \Bigl\{\frac{xh-cp}{yh}\Bigr\}\,,
      \] 
      where $h$ ranges over the set of monic polynomials in $R_{r}$ whose GCDs 
      with $p$ are equal to $y$, and where $c$ ranges over all elements of 
      $\kstar$ with $cp\not\equiv xh\bmod y^2$.
\end{itemize}
\end{proposition}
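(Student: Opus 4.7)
The strategy is to parametrize degree-$r$ rational functions by their lowest-terms representations. Every such $F$ admits a unique expression $F = g/h$ with $g, h \in R_{r}$ coprime and $h$ monic. By the definition of $\Phi$, the condition $\Phi(F) = D = \Div p$ is equivalent to $xh - yg = c p$ for some $c \in \kstar$, since both sides are homogeneous of degree $r + 1$ with the same divisor. Rearranging gives $g = (xh - cp)/y$, so the problem reduces to determining which pairs $(h, c)$ yield a $g \in R_{r}$ that is coprime to $h$.

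I would first analyze when $y \mid xh - cp$. Since $h$ and $p$ are homogeneous, the reduction of $xh - cp$ modulo $y$ is a single monomial, namely $(h_{0} - c p_{0})\, x^{r+1}$, where $h_{0}$ is the coefficient of $x^{r}$ in $h$ and $p_{0}$ is the coefficient of $x^{r+1}$ in $p$. Under the paper's monic convention, $h_{0}$ equals $1$ exactly when $y \nmid h$ and is $0$ otherwise, and similarly for $p_{0}$. In Case 1 we have $p_{0} = 1$, so the divisibility forces $y \nmid h$ and $c = 1$; in Case 2 we have $p_{0} = 0$, so it forces $y \mid h$ with $c$ unrestricted.

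Next I would translate $\gcd(g, h) = 1$ into the stated conditions. Since $yg = xh - cp$, any common factor of $g$ and $h$ must divide $cp$, and any common factor of $h$ and $p$ other than $y$ must divide $g$. In Case 1 this yields the equivalence that $\gcd(g, h) = 1$ if and only if $\gcd(h, p) = 1$, which combined with the already-forced $y \nmid h$ is exactly $\gcd(h, yp) = 1$. In Case 2, writing $h = y h_{1}$ and $p = y \tilde{p}$, the same argument applied to the non-$y$ factors gives $\gcd(h_{1}, \tilde{p}) = 1$, i.e., $\gcd(h, p) = y$; in addition one must rule out $y \mid g$, which amounts to $y^{2} \nmid xh - cp$, i.e., $cp \not\equiv xh \pmod{y^{2}}$.

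Finally I would assemble the bijection. Given $(h, c)$ satisfying the stated conditions, the formula $F = (xh - cp)/(yh)$ defines a rational function of degree $r$ (after cancelling one factor of $y$ in Case 2) whose fixed-point divisor is $\Div(cp) = D$. Conversely, any degree-$r$ rational function $F$ with $\Phi(F) = D$ determines $(h, c)$ uniquely: $h$ is the monic denominator of its lowest-terms representation, and $c$ is the scalar such that $xh - yg = cp$. The main subtlety throughout is carefully tracking the paper's convention under which $y \mid h$ is compatible with $h$ being \emph{monic} (precisely when the dehomogenization has degree less than $r$); this is exactly what makes Case 2 nonempty.
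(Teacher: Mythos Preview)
Your proposal is correct and follows essentially the same approach as the paper: write $F=g/h$ in lowest terms with $h$ monic, translate $\Phi(F)=D$ into $xh-yg=cp$, use the leading coefficients to pin down $c$ and the $y$-divisibility of $h$, and then show that coprimality of $g$ and $h$ is equivalent to the stated $\gcd$ conditions on $h$ and $p$ (together with $cp\not\equiv xh\bmod y^{2}$ in Case~2). The only cosmetic difference is that you isolate the leading-coefficient analysis as a single preliminary step, whereas the paper weaves it into each case; your closing remark about the monic convention for $h$ with $y\mid h$ is a helpful clarification that the paper leaves implicit.
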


\begin{proof}
In order for a rational function $F$ to have $\Phi(F) = D$, we must be able to 
write $F = g/h$ for two coprime elements $g,h\in R_{r}$, with $h$ monic, so that
$\Div(xh-yg) = D$. This holds if and only if $xh-yg = cp$ for some $c\in\kstar$.

Suppose we have $xh-yg = cp$ for such a $g$, $h$, and~$c$, and suppose $p$ is
not divisible by~$y$. Then $h$ must also be coprime to $y$. Since $p$ and $h$ 
are both monic, it follows that the coefficient of $x^{r+1}$ in $p$, and the
coefficient of $x^r$ in $h$, are both equal to~$1$. Since the monomial $x^{r+1}$
does not occur in $yg$, it follows that $c=1$, so $g = (xh-p)/y$. Finally, we
see that $h$ and $p$ can share no common factor other than $y$, because
otherwise $h$ and $g$ would not be coprime. Conversely, if $h$ is a monic
polynomial of degree~$r$ that is coprime to~$yp$, then $(xh-p)/(yh)$ is a 
degree-$r$ function with divisor of fixed points equal to~$D$.

Now suppose $xh-yg = cp$ for such a $g$, $h$, and~$c$, where $p$ is divisible
by~$y$. Then $h$ must also be divisible by $y$. But $p$ and $h$ cannot both be 
divisibly by $y^2$, for otherwise $g$ would be divisible by~$y$, contradicting 
the requirement that $g$ and $h$ be coprime. Then $g = x(h/y) - c(p/y)$, and if
$h$ and $p$ had any common factors other than $y$, this common factor would 
divide $g$ as well. Also, since $g$ must not be divisible by $y$, we see that 
$cp\not\equiv xh\bmod y^2$. Conversely, if $h$ is a monic polynomial of degree
$r$ with $(h,p)=y$ and if $c\in\kstar$ satisfies $cp\not\equiv xh\bmod y^2$, 
then $(xh-cp)/(yh)$ is a degree-$r$ function with divisor of fixed points equal
to~$D$.
\end{proof}

\begin{corollary}
\label{C:Ffromfixedpoints}
Let $D$ be an effective divisor on $\PP^1$ of degree~$r+1$. The set of 
degree-$r$ rational functions $F$ with $\Phi(F) = D$ contains at most $q^{r}$
elements, where $q = \#k$, and it can be enumerated in time $O(q^r)$, measured
in arithmetic operations in $k$.\qed
\end{corollary}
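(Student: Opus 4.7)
The plan is to derive the corollary directly from Proposition~\ref{P:fixedpoints}, which in each of its two cases exhibits the set of $F$ with $\Phi(F)=D$ as a simple parametrized family, and then to bound the cost of enumerating that family and checking its defining conditions.

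First I would split on whether $\infty\in\Supp D$. When $\infty$ is not in the support, Proposition~\ref{P:fixedpoints} identifies the set of valid $F$ with the set of monic $h\in R_r$ coprime to $yp$. There are exactly $q^r$ monic homogeneous polynomials in $R_r$---the coefficient of $x^r$ is pinned at $1$, leaving $r$ free coefficients in $k$---so the cardinality bound $\le q^r$ is immediate. To enumerate, I would loop through the $q^r$ coefficient vectors in $k^r$, compute $\gcd(h,yp)$ for each candidate $h$, retain those with trivial GCD, and emit the corresponding $F=(xh-p)/(yh)$.

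When $\infty\in\Supp D$, write $p=y\tilde p$ and observe that the condition $\gcd(h,p)=y$ forces $h=y\tilde h$ for some monic $\tilde h\in R_{r-1}$ with $\gcd(\tilde h,\tilde p)=1$; there are at most $q^{r-1}$ such $\tilde h$. The remaining parameter $c$ ranges over $\kstar$, and the restriction $cp\not\equiv xh\pmod{y^2}$ is a single linear condition in $c$ involving only the $y^1$-coefficients of $cp$ and $xh$, so it excludes at most one value. The total count is therefore at most $(q-1)q^{r-1}<q^r$, and enumeration proceeds in the obvious nested loop: iterate over $\tilde h$, test $\gcd(\tilde h,\tilde p)=1$, and then iterate over the admissible $c\in\kstar$.

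The only remaining task is the complexity bookkeeping. Each feasibility check is a GCD of polynomials of degree at most $r+1$ together with one scalar comparison, costing $O(r^2)$ arithmetic operations in $k$, i.e., $O(1)$ once $r$ is treated as a constant. Multiplying by the $q^r$ (respectively $\le q^r$) candidates yields total cost $O(q^r)$ arithmetic operations in $k$. I do not foresee any real obstacle: the corollary is essentially a counting and complexity readout of Proposition~\ref{P:fixedpoints}, and the only mild subtlety is iterating over the full parameter space (of size $q^r$) rather than over the valid $h$'s alone, so that the upper bound on the total work falls out cleanly from the loop structure.
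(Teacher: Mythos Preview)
Your proposal is correct and matches the paper's approach: the corollary is marked there with a bare \qed\ as an immediate consequence of Proposition~\ref{P:fixedpoints}, and your case split plus parameter count is exactly the intended reading. One small caveat: under the paper's convention $h\in R_r$ is monic when $h(x,1)$ is monic as a univariate polynomial (so the coefficient of $x^r$ need not be nonzero), hence in your second case the monic $\tilde h\in R_{r-1}$ number $(q^{r}-1)/(q-1)$ rather than $q^{r-1}$; paired with at most $q-1$ values of $c$ this still gives at most $q^{r}-1$ functions, so your bound and enumeration cost are unaffected.
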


%%%%%%%%%%%%%%%%%%%%%%%%%%%%%%%%%%%%%%%%%%%%%%%%%%%%%%%%%%%%%%%%%%%%%%%%%%%%%%%%
%%%%%%%%%%%%%%%%%%%%%%%%%%%%%%%%%%%%%%%%%%%%%%%%%%%%%%%%%%%%%%%%%%%%%%%%%%%%%%%%
\subsection{Places with a given Frobenius function}
\label{SS:Frobeniusfunctioninversion}

In this section we show how to find all the places of odd degree $n\ge 3$ with a
given Frobenius function. First we consider Frobenius functions of degree
greater than~$1$.

\begin{theorem}
\label{T:FFinversion}
Let $F$ be a rational function on $\Pover{\FF_q}$ of degree $r>1$ and let $n$ be
an odd integer integer with $n\ge 2r + 1$. Let $G\colonequals F^{(n)}$ be the
$n$th iterate of~$F$. Then every degree-$n$ place with Frobenius function equal 
to $F$ occurs in the divisor of fixed points of~$G$.
\end{theorem}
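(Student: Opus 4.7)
The plan is to unwind both sides of the statement into claims about roots in $\overline{\FF_q}$. Write $G = g/h$ in lowest terms, so that $\Phi(G)$ is the divisor of $xh - yg$. A degree-$n$ place $P$ occurs in $\Phi(G)$ if and only if its monic defining polynomial $f \in \FF_q[x,y]$ divides $xh - yg$; since $f$ is irreducible over $\FF_q$, this is equivalent to saying that some (equivalently, every) root $\alpha$ of $f$ in $\overline{\FF_q}$ satisfies $G(\proj{\alpha}{1}) = \proj{\alpha}{1}$. So the whole theorem reduces to showing that every root $\alpha$ of $f$ is a fixed point of $G$.

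The heart of the argument is the iteration. By Theorem~\ref{T:Frobeniusfunction}, $F(\proj{\alpha'}{1}) = \proj{(\alpha')^q}{1}$ for \emph{every} root $\alpha'$ of~$f$. Because $f$ has coefficients in $\FF_q$, its roots are permuted by the $q$th-power Frobenius, so $\alpha^q, \alpha^{q^2}, \ldots$ are all roots of~$f$ and the defining property of $F$ applies to each of them. Iterating gives $F^{(k)}(\proj{\alpha}{1}) = \proj{\alpha^{q^k}}{1}$ for all $k \geq 1$. Taking $k=n$ and using that $\alpha \in \FF_{q^n}$ yields $G(\proj{\alpha}{1}) = \proj{\alpha^{q^n}}{1} = \proj{\alpha}{1}$, as needed.

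Two small loose ends remain. First, one should check that $\Phi(G)$ is actually defined, i.e.\ that $G \neq x/y$; this is immediate because degrees of self-maps of $\PP^1$ multiply under composition, so $\deg G = r^n \geq 2^n > 1$. Second, it is worth noting where the hypothesis $n \geq 2r+1$ enters: it does not play a role in the argument above, but it is the exact range in which the statement is non-vacuous, since Theorem~\ref{T:Frobeniusfunction} forces the Frobenius function of a degree-$n$ place to have degree at most $(n-1)/2$.

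I do not anticipate a serious obstacle; the proof is essentially an unwinding of the equivariance built into the definition of the Frobenius function. The only step that requires a moment's care is the iteration, where one must recall that the defining property of $F$ in Theorem~\ref{T:Frobeniusfunction} applies uniformly to every root of~$f$, not just to $\alpha$ itself, so that the composition $F^{(n)}$ can be evaluated one Frobenius power at a time.
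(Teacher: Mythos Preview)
Your proof is correct and follows essentially the same line as the paper's: show that every root $\alpha$ of $f$ satisfies $F^{(n)}(\proj{\alpha}{1}) = \proj{\alpha^{q^n}}{1} = \proj{\alpha}{1}$, hence lies in the fixed-point divisor of~$G$. The only cosmetic difference is that the paper justifies the iteration step by applying the $q$th-power Frobenius to the equation $F(\proj{\alpha}{1}) = \proj{\alpha^q}{1}$ and invoking the $\FF_q$-rationality of~$F$, whereas you invoke the defining property of~$F$ directly at each conjugate $\alpha^{q^i}$; these are equivalent observations, and your added remarks on $G \neq x/y$ and on the role of the bound $n \ge 2r+1$ are welcome clarifications.
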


\begin{proof}
Let $P$ be a degree-$n$ place whose Frobenius function is~$F$ and let $f$ be the
monic homogeneous polynomial whose divisor is~$P$. Let $\alpha$ be a root 
of~$f$, so that we have $\proj{\alpha^q}{1} = F(\proj{\alpha}{1})$. Applying
Frobenius to both sides, we find that
\[
\proj{\alpha^{q^2}}{1} = F(\proj{\alpha^q}{1})= F^{(2)}(\proj{\alpha}{1})\,,
\]
where the first equality follows from the fact that $F$ is $\FF_q$-rational.
Continuing in this way, we see that 
\[
\proj{\alpha}{1} = \proj{\alpha^{q^n}}{1} = F^{(n)}(\proj{\alpha}{1})\,.
\]
It follows that $\alpha$ is a root of the function defining the divisor of fixed
points for~$G$, so the place $P$ appears in $\Phi(G)$.
\end{proof}

Theorem~\ref{T:FFinversion} gives us a simple algorithm to find the degree-$n$
places with a given Frobenius function $F$ of degree at least~$2$: We go through
the finite set of degree-$n$ places $P$ appearing in $\Phi(G)$, where 
$G\colonequals F^{(n)}$, and for each such $P$ we check to see whether $F$ is
the Frobenius function for~$P$. The degree of $G$ is large, but bounded in terms
of~$n$, so for fixed $n$ this algorithm takes time polynomial in $\log q$, with
the primary step being to find all degree-$n$ factors of the polynomial defining
the divisor of fixed points of~$G$.

But Theorem~\ref{T:FFinversion} only deals with rational functions $F$ of degree
greater than~$1$. The reason for this is that if $F$ is a Frobenius function of
degree~$1$, then the rational function $G$ in the theorem will be the identity
function, and its divisor of fixed points is not defined. To find places with
Frobenius functions of degree~$1$, we instead use the following result.

\begin{theorem}
\label{T:FF1}
If there are places of $\Pover{\FF_q}$ of odd degree $n>1$ with Frobenius
functions of degree~$1$, then either $n$ is the prime divisor of $q$, or $n$
divides $q-1$, or $n$ divides~$q+1$. 

Suppose $n>1$ is an odd integer that is either the prime divisor of $q$ or a
divisor of $q-1$ or $q+1$. Let $s$ be an element of $\FF_{q^2}\setminus\FF_q$.
If $n$ is the prime divisor of $q$, choose an element $t$ of $\FF_q$ with 
nonzero absolute trace and let $T$ be the singleton set 
$\{ x^n - xy^{n-1} - ty^n \}$. If $n$ divides $q-1$, let $\zeta$ be a generator
of $\FF_q^\times$ and let $T$ be the set
\[
\{ x^n - \zeta^i y^n \ \vert \ 0 < i<n/2, \ \gcd(i,n) = 1 \}\,.
\]
If $n$ divides $q+1$, let $\xi\in\FF_{q^{2n}}$ be an element of order $n(q+1)$
and let $T$ be the set of \textup(homogenized\textup) minimal polynomials of the 
elements
\[
\biggl\{\frac{s \xi^i + s^q}{\xi^i + 1} \ \Big\vert \  0< i<n/2, \ \gcd(i,n) = 1 \biggr\}\,,
\]
where $s$ is as above. Then in every case, the divisors of the elements of $T$
form a complete set of unique representatives for the action of $\PGL_2(\FF_q)$
on the set of degree-$n$ places of $\Pover{\FF_q}$ with Frobenius functions of
degree~$1$.
\end{theorem}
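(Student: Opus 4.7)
The plan is to show the necessary condition on $n$ first, then verify case by case that $T$ is a complete set of unique orbit representatives. For necessity, fix a root $\alpha \in \FF_{q^n}$ of the polynomial defining $P$; iterating $F(\alpha) = \alpha^q$ yields $F^{(n)}(\alpha) = \alpha^{q^n} = \alpha$. Since $n \ge 3$ and any non-identity Möbius transformation fixes at most two points of $\PP^1(\bar{k})$, while $F^{(n)}$ fixes all $n$ Galois conjugates of $\alpha$, we conclude that $F^{(n)}$ is the identity, and the same reasoning applied to $m\colonequals\mathrm{ord}(F)$ forces $m = n$. The conjugacy-class classification for $\PGL_2(\FF_q)$ then yields three mutually exclusive possibilities (since $n$ is odd, $\gcd(q-1,q+1)\le 2$, and $p \nmid q\pm 1$): $F$ is unipotent, forcing $n = p$; split semisimple, forcing $n \mid q - 1$; or non-split semisimple, forcing $n \mid q + 1$.

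By Lemma~\ref{L:PGLequivariance}, for each $\PGL_2(\FF_q)$-conjugacy class of $F$ the $\PGL_2(\FF_q)$-orbits of places whose Frobenius function lies in that class correspond to $C_F$-orbits of places having Frobenius function equal to $F$, where $C_F$ is the centralizer of $F$. In the parabolic case, I would normalize $F(z) = z + 1$ and write the minimal polynomial of $\alpha$ as $x^p - x - t$ with $t = \alpha^p - \alpha \in \FF_q$. The identity $\alpha^{p^k} = \alpha + t + t^p + \cdots + t^{p^{k-1}}$ (by induction) gives $\alpha^q = \alpha + \mathrm{Tr}_{\FF_q/\FF_p}(t)$, so places with this Frobenius function correspond to trace-$1$ values of $t$. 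The centralizer $C_F = \{z + c : c \in \FF_q\}$ acts by $t \mapsto t + (c^p - c)$, and since the Artin--Schreier map surjects onto $\ker(\mathrm{Tr}_{\FF_q/\FF_p})$, this action is transitive, yielding a single orbit; any $t$ with nonzero trace suffices as a representative, because all nontrivial unipotents in $\PGL_2(\FF_q)$ are conjugate. In the split case, I would normalize $F(z) = \lambda z$ with $\lambda = \zeta^{(q-1)i/n}$ of order $n$; writing $\mu \colonequals \alpha^n$, the equation $\alpha^{q-1} = \lambda$ becomes $\mu^{(q-1)/n} = \lambda$, so $\mu$ ranges over the coset of $n$th powers in $\FF_q^\times$ containing $\zeta^i$, and the diagonal centralizer $\{z \mapsto \eta z\}$ acts transitively via $\mu \mapsto \eta^n \mu$. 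This gives representative $\mu = \zeta^i$ and minimal polynomial $x^n - \zeta^i y^n$; the Weyl involution $\lambda \leftrightarrow \lambda^{-1}$ identifies $i$ with $n - i$, so restricting to $0 < i < n/2$ produces exactly the stated complete set.

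In the non-split case, I would conjugate $F$ over $\FF_{q^2}$ by the Möbius transformation $z \mapsto (z - s^q)/(z - s)$, which sends the conjugate pair of geometric fixed points of $F$ to $\{0, \infty\}$ and converts $F$ into $z \mapsto \lambda z$ for some $\lambda \in \FF_{q^2}^\times$ of norm $1$ and order $n$ (the norm-$1$ condition coming from the $\FF_q$-rationality of the original $F$). The equation $F(\alpha) = \alpha^q$ then becomes $\beta^{q+1} = \lambda^{-1}$ for $\beta = (\alpha - s^q)/(\alpha - s)$, and setting $\xi = -\beta$ yields both the formula $\alpha = (s\xi + s^q)/(\xi + 1)$ and the order constraint $\mathrm{ord}(\xi) = n(q+1)$ (which ensures $\xi \in \FF_{q^{2n}}^\times$); as in the split case the centralizer acts transitively on the fiber while the Weyl involution $\lambda \leftrightarrow \lambda^{-1}$ identifies $i$ with $n-i$. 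The main obstacle will be this non-split case, because the normalizing conjugation is not $\FF_q$-rational: I must verify both that the resulting $\alpha$ yields an $\FF_q$-rational minimal polynomial of degree exactly $n$, and that the stabilizer in $C_F$ of any single place equals $\langle F \rangle$ (so the centralizer--Weyl action on $\xi$'s collapses to exactly the enumerated orbit representatives).
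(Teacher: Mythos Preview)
Your approach is essentially the same as the paper's in substance—both rest on the trichotomy of order-$n$ elements of $\PGL_2(\FF_q)$ (unipotent, split semisimple, non-split semisimple)—but you organize it differently. The paper groups places by the $\PGL_2(\FF_q)$-orbit of the Frobenius \emph{divisor} $\Phi(F)$ (one of $2P_\infty$, $P_\infty+P_0$, or an irreducible degree-$2$ place) and works with the stabilizer of that divisor; you group by the conjugacy class of $F$ itself and work with the centralizer $C_F$. Since the stabilizer of $\Phi(F)$ is the normalizer of the torus containing $F$ (or the Borel, in the unipotent case), the two decompositions amount to taking the same double quotient in different orders. Your version is arguably a bit more streamlined, while the paper's fits its running theme of Frobenius divisors.

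Your unipotent and split cases are correct. In the non-split case your outline is right, but two points need care. First, the claim ``setting $\xi=-\beta$ yields $\mathrm{ord}(\xi)=n(q+1)$'' conflates the variable $\beta$ attached to a particular $\alpha$ with the fixed element $\xi$ of order $n(q+1)$ in the theorem statement: what you actually get is that $\beta^{q+1}$ has order $n$, so $\beta$ lies in the cyclic group $\langle\xi\rangle$ and equals $\pm\xi^i$ for some $i$ with $\gcd(i,n)=1$; it need not itself have order $n(q+1)$. Second, the verifications you defer (that the resulting $\alpha$ lies in $\FF_{q^n}$ with $[\FF_q(\alpha):\FF_q]=n$, and that the stabilizer in $C_F$ of each place is exactly $\langle F\rangle$) do require work. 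The key observation is that every nontrivial element of the non-split torus fixes only $s$ and $s^q$, neither of which equals $\alpha$ since $\alpha\in\FF_{q^n}$ and $s\in\FF_{q^2}\setminus\FF_q$ with $n$ odd; from this both verifications follow, and the centralizer (of order $q+1$) then acts with stabilizer of order $n$ on the $(q+1)/n$ places, giving a single orbit per conjugacy class as you claim. The paper carries this out via a direct computation with the substitution $\beta=(s^q-\alpha)/(\alpha-s)$, tracking how Frobenius acts on $\beta$, which would serve equally well to complete your sketch.
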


\begin{proof}
Suppose $P$ is a place of $\Pover{\FF_q}$ of odd degree $n>1$ whose Frobenius
function $F$ has degree~$1$. Since $F$ has degree~$1$ we can view it as an
element of $\PGL_2(\FF_q)$. Let $f$ be the monic irreducible homogeneous
polynomial in $\FF_q[x,y]$ that defines~$P$, and let $\alpha\in\Fbar_q$ be a
root of~$f$. Then for every $i\ge 0$ we have
\[
\proj{\alpha^{q^i}}{1} = F^{(i)}(\proj{\alpha}{1})\,,
\]
so $F^{(i)}$ is not the identity for $0<i<n$. On the other hand, $F^{(n)}$ fixes
$\proj{\alpha}{1}$ and all of its conjugates, so it must be the identity. In
other words, as an element of $\PGL_2(\FF_q)$, the function $F$ has order~$n$.

An easy exercise shows that the set of orders of elements of $\PGL_2(\FF_q)$
consists of the union of the prime divisor of~$q$, the divisors of $q-1$, and 
the divisors of $q+1$. This proves the first statement of the theorem.

Let $P_\infty$ be the place of $\Pover{\FF_q}$ at infinity, let $P_0$ be the
place at $0$, and let $P_s$ be the degree-$2$ place whose geometric points are
$\proj{s}{1}$ and $\proj{s^q}{1}$, where $s$ is the element of 
$\FF_{q^2}\setminus\FF_q$ from the statement of the theorem. If the Frobenius
function of a place $P$ has degree~$1$, then the Frobenius divisor of $P$ has
degree~$2$. Every divisor of degree~$2$ on $\Pover{\FF_q}$ is in the 
$\PGL_2(\FF_q)$ orbit of exactly one of the following divisors: 
$D_1\colonequals 2P_\infty$, $D_2\colonequals P_\infty + P_0$, and 
$D_3\colonequals P_s$. Therefore, by Lemma~\ref{L:PGLequivariance} we may choose
our orbit representatives for the action of $\PGL_2(\FF_q)$ on the degree-$n$
places with Frobenius functions of degree~$1$ to have one of these three
divisors as their Frobenius divisors.

Consider the set $S_1$ of monic irreducible degree-$n$ homogenous polynomials 
$f$ that define places whose Frobenius divisors are equal to $D_1$. By 
Proposition~\ref{P:fixedpoints}, the functions $F$ with $\Phi(F) = D_1$ are the
functions $x/y + r$ for $r\in\FF_q$ with $r\ne 0.$ Every such function $F$ has
order equal to the prime divisor of~$q$, so in this case $n$ must be that prime.

The subgroup of $\PGL_2(\FF_q)$ that stabilizes the set $S_1$ is the ``$ax+b$'' 
group --- that is, the classes of the upper triangular matrices. We check that 
if $\alpha$ and $\beta$ are elements of $\FF_{q^n}$ such that both 
$\alpha^q - \alpha$ and $\beta^q - \beta$ are nonzero elements of~$\FF_q$, then 
there is an element of this subgroup that takes $\alpha$ to $\beta$. Thus, all
of the places with Frobenius divisors equal to $D_1$ are equivalent up to 
$\PGL_2(\FF_q)$, and one of them is given by the divisor of the unique 
polynomial in the set $T$ given in the theorem in the case where $n$ is the
prime divisor of~$q$.

Now consider the set $S_2$ of monic irreducible degree-$n$ homogenous 
polynomials $f$ that define places whose Frobenius divisors are equal to~$D_2$. 
By Proposition~\ref{P:fixedpoints}, the functions $F$ with $\Phi(F) = D_2$ are
the functions $rx/y$ for $r\in\FF_q$ with $r\ne 0$ and $r\ne 1$, so if $f$ is an
element of $S_2$ and if $\alpha\in\FF_{q^n}$ is a root of $f$, then 
$\alpha^q = r\alpha$ for some $r\in\FF_q^\times$. Since $F\in\PGL_2(\FF_q)$ has 
order~$n$, the element $r$ of $\FF_q^\times$ must also have order~$n$, so 
$n\mid q-1$. Conversely, if $\alpha$ is an element of $\FF_{q^n}^\times$ such
that $\alpha^{q-1}$ has order~$n$, then $\alpha$ defines a place with Frobenius
divisor $D_2$. Since $n$ divides $q-1$, there are elements of $\FF_{q^n}^\times$
of order $n(q-1)$; let $\omega$ be one of these. Then the $\alpha$ that give
places of degree~$n$ with Frobenius divisor $D_1$ are exactly the elements 
$\omega^i$ with $\gcd(i,n) = 1$.

The subgroup of $\PGL_2(\FF_q)$ that stabilizes the set $S_2$ consists of the
classes of the diagonal and antidiagonal matrices. The action of this subgroup
on the set $\{\omega^i \ \vert\ \gcd(i,n) = 1\}$ is generated by multiplication
by nonzero elements of $\FF_q$ (which are exactly the powers of $\omega^n$) and
by inversion. It is clear that the set 
${\{\omega^i \ \vert \ 0 <i<n/2, \ \gcd(i,n) = 1\}}$ is a complete set of unique
representatives for this action. The homogenized minimal polynomials of these
elements are exactly the elements of the set $T$ given in the theorem in the
case $n\mid q-1$.

Finally we consider the set $S_3$ of monic irreducible degree-$n$ homogenous
polynomials $f$ that define places whose Frobenius divisors are equal to
$D_3 = P_s$. Let $p$ be the monic homogeneous polynomial that defines the place 
$P_s$, and write $p=x^2 + axy + by^2$ for $a,b \in \FF_q$. Now 
Proposition~\ref{P:fixedpoints} tells us that the functions $F$ with 
$\Phi(F) = D_3$ are the functions of the form 
$F_r\colonequals ((r-a)x - by)/(x + ry)$ for $r\in\FF_q$.

Given an $\alpha\in\FF_{q^n}$ with $\alpha^q = ((r-a)\alpha - b)/(\alpha + r)$,
set $\beta = (s^q-\alpha)/(\alpha-s)$, so that $\beta\in\FF_{q^{2n}}$ and
$\alpha = (s\beta + s^q)/(\beta + 1)$. Note that then 
\[
\beta^q = \Bigl(\frac{r+s^q}{r+s\phantom{^q}}\Bigr) \frac{1}{\beta}\,,
\]
and more generally
\begin{equation}
\label{EQ:betapowers}
\beta^{q^i} = 
\begin{cases}
\displaystyle \Bigl(\frac{r+s\phantom{^q}}{r+s^q}\Bigr)^i \beta      & \text{if $i$ is even\textup{;}}\\[2.5ex]
\displaystyle \Bigl(\frac{r+s^q}{r+s\phantom{^q}}\Bigr)^i \beta^{-1} & \text{if $i$ is odd.}\\
\end{cases}
\end{equation}
Since $\alpha^{q^n} = \alpha$ and $n$ is odd, we find that
\[
\frac{s\beta + s^q}{\beta + 1} 
  = \alpha
  = \alpha^{q^n}  
  = \frac{s^q\beta^{q^n} + s}{\beta^{q^n} + 1}
  = \frac{s\beta^{-q^n} + s^q}{\beta^{-q^n} + 1}\,,
\]
so that $\beta^{-q^n} = \beta$. Then~\eqref{EQ:betapowers} shows that 
$c\colonequals(r+s^q)/(r+s)\in\FF_{q^2}$ has order~$n$. It is also clear that
the norm of $c$ from $\FF_{q^2}$ to $\FF_q$ is~$1$, so its order divides~$q+1$,
and we see that $n\mid q+1$. Since $\beta^{q+1} = c$, we see that the $\beta$
has order dividing $n(q+1)$, so that $\beta$ is a power of the element 
$\xi\in\FF_{q^{2n}}$ chosen in the statement of the theorem. Furthermore, if we
write $\beta = \xi^i$, then we must have $\gcd(i,n) = 1$, for otherwise 
$\beta^{q+1}$ would not have order~$n$. On the other hand, we check that if we 
set $\alpha = (s\xi^i + s^q)/(\xi^i + 1)$ for an integer $i$ with 
$\gcd(i,n) = 1$, then we have
\[
\alpha^q = \frac{(r-a)\alpha - b}{x + r}
\]
where 
\[
r = \frac{-\xi^{q+1}s + s^q}{\xi^{q+1}-1}\,.
\]
Using the fact that $\xi$ has order $n(q+1)$ and that $n\mid q+1$ we see that 
$\xi^{q^2+q}\cdot \xi^{q+1} = 1$, and from this we calculate that $r^q = r$, so
that $r\in \FF_q$. Thus, the place associated to this $\alpha$ has Frobenius
divisor equal to~$D_3$.

We calculate that the subgroup of $\PGL_2(\FF_q)$ that stabilizes the set $S_3$ 
is generated by the classes of the matrices 
$M_u\colonequals \twobytwo{u-a}{-b}{1}{\phantom{-}u}$ (which fix $s$ and $s^q$) 
and the matrix $N\colonequals \twobytwo{-1}{-a}{\phantom{-}0}{\phantom{-}1}$
(which swaps $s$ and $s^q$).

Suppose $\alpha$ and $\beta$ are as above, set 
$\alpha' \colonequals M_u(\alpha)$, and let 
$\beta' = (s^q-\alpha')/(\alpha'-s)$. We check that 
$\beta' = (u+s)/(u+s^q)\beta.$ Since $(u+s)/(u+s^q)$ is an element of
$\FF_{q^2}$ whose norm to $\FF_q$ is~$1$, we see that it is a power of~$\xi^n$. 
Therefore, if we write $\beta = \xi^i$ and $\beta' = \xi^{i'}$, then 
$i' \equiv i \bmod n$. Likewise, if we let $\alpha' = N(\alpha)$ and
$\beta' = (s^q-\alpha')/(\alpha'-s)$, then $\beta' = 1/\beta$, so if we write 
$\beta = \xi^i$ and $\beta' = \xi^{i'}$, then $i' \equiv -i \bmod n(q+1)$. From
this we see that the set $\{\xi^i \ \vert \ 0<i<n/2, \ \gcd(i,n) = 1\}$ is a
complete set of unique representatives for the action of the stabilizer of $S_3$
on the set of~$\beta$s, so the set 
\[
\biggl\{\frac{s \xi^i + s^q}{\xi^i + 1} \ \Big\vert \  0< i<n/2, \ \gcd(i,n) = 1 \biggr\}\,.
\]
is a complete set of unique representatives for the action of the stabilizer of 
$S_3$ on the set of $\alpha$s whose associated places have Frobenius 
divisor~$D_3$. The theorem follows.
\end{proof}

\begin{remark}
The Frobenius function of a degree-$3$ place has degree~$1$, so 
Theorem~\ref{T:FF1} gives us a complete set of unique representatives for the
action of $\PGL_2(\FF_q)$ on the degree-$3$ places. Of course, we already know
that the action of  $\PGL_2(\FF_q)$ on these places is transitive, so there is
only one element in such a set of representatives. It is interesting to see how
the representative provided by the theorem depends on the residue class of $q$
modulo~$3$, and to see how the shape of the Frobenius divisor also depends on
this residue class.
\end{remark}

%%%%%%%%%%%%%%%%%%%%%%%%%%%%%%%%%%%%%%%%%%%%%%%%%%%%%%%%%%%%%%%%%%%%%%%%%%%%%%%%
%%%%%%%%%%%%%%%%%%%%%%%%%%%%%%%%%%%%%%%%%%%%%%%%%%%%%%%%%%%%%%%%%%%%%%%%%%%%%%%%
\subsection{Cross polynomials}
\label{SS:crosspolynomials}

In this subsection we review the definition and main property of ``cross
polynomials'' from~\cite{Howe2024}.

\begin{definition}[Definition~4.1 from \cite{Howe2024}]
\label{D:cross}
Let $f$ be a monic irreducible homogeneous polynomial in $R_n$ with $n\ge 4$, 
let $\alpha\in\FF_{q^n}$ be a root of~$f$, and let $\chi\in\FF_{q^n}$ be the 
cross ratio of $\alpha$, $\alpha^q$, $\alpha^{q^2}$, and $\alpha^{q^3}$; that
is,
\[
\chi\colonequals
\frac{(\alpha^{q^3}-\alpha^q)(\alpha^{q^2}-\alpha)}
     {(\alpha^{q^3}-\alpha)(\alpha^{q^2}-\alpha^{q})}.
\]
The \emph{cross polynomial} $\Cross(f)$ of $f$ is the characteristic polynomial
of $\chi$ over~$\FF_q$. If $P$ is a place of $\Pover{\FF_q}$, we let $\Cross(P)$
be the cross polynomial of the monic irreducible polynomial whose divisor
is~$P$.
\end{definition}

\begin{theorem}[Theorem~4.2 from \cite{Howe2024}]
\label{T:cross}
Two monic irreducible polynomials in $R_n$ with $n\ge 4$ lie in the same orbit 
under the action of $\PGL_2(\FF_q)$ if and only if they have the same cross
polynomial. \qed
\end{theorem}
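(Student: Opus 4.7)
The plan is to prove both implications by exploiting the well-known fact that the cross ratio of four distinct points on $\PP^1$ is an invariant for the action of $\PGL_2$, combined with the fact that $\FF_q$-rational elements of $\PGL_2$ commute with the $q$-th power Frobenius $\sigma$ on $\Fbar_q$.

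For the easier direction, suppose $f' = \Gamma(f)$ for some $\Gamma\in\PGL_2(\FF_q)$. If $\alpha$ is a root of $f$, then $\beta\colonequals\Gamma(\proj{\alpha}{1})$ (viewed as a point of $\PP^1(\Fbar_q)$) is a root of $f'$, and because the matrix entries of $\Gamma$ lie in $\FF_q$ we have $\Gamma^\sigma = \Gamma$, so $\beta^{q^i}=\Gamma(\alpha^{q^i})$ for $i=1,2,3$. Since $\PGL_2$ preserves cross ratios, the $\chi$ attached to $\beta$ equals the $\chi$ attached to $\alpha$; hence their characteristic polynomials over $\FF_q$ agree, i.e. $\Cross(f)=\Cross(f')$.

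For the converse, suppose $\Cross(f)=\Cross(f')$. Fix a root $\alpha$ of $f$ and let $\chi$ be its cross ratio. Any root $\beta_0$ of $f'$ has cross ratio $\chi_0$ whose characteristic polynomial equals $\Cross(f')=\Cross(f)$, so $\chi$ and $\chi_0$ share the same minimal polynomial over $\FF_q$ and are therefore Frobenius-conjugate. Replacing $\beta_0$ by a suitable $\FF_q$-conjugate $\beta\colonequals \beta_0^{q^j}$ (still a root of $f'$), we may assume the cross ratios of $(\alpha,\alpha^q,\alpha^{q^2},\alpha^{q^3})$ and $(\beta,\beta^q,\beta^{q^2},\beta^{q^3})$ are equal. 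Since $n\ge 4$ the four points in each tuple are pairwise distinct, so by sharp $3$-transitivity of $\PGL_2$ there is a unique $\Gamma\in\PGL_2(\Fbar_q)$ sending the ordered triple $(\alpha,\alpha^q,\alpha^{q^2})$ to $(\beta,\beta^q,\beta^{q^2})$, and preservation of the cross ratio then forces $\Gamma(\alpha^{q^3})=\beta^{q^3}$.

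It remains to verify that $\Gamma$ is $\FF_q$-rational, after which $\Gamma(f)=f'$ follows from~\eqref{EQ:divgamma=gammadiv}. Applying $\sigma$ to the three defining equations $\Gamma(\alpha^{q^i})=\beta^{q^i}$ for $i=0,1,2$, we see that $\Gamma^\sigma$ sends the triple $(\alpha^q,\alpha^{q^2},\alpha^{q^3})$ to $(\beta^q,\beta^{q^2},\beta^{q^3})$; but $\Gamma$ sends that same triple to the same image, so sharp $3$-transitivity gives $\Gamma^\sigma=\Gamma$, hence $\Gamma\in\PGL_2(\FF_q)$. The main subtlety I expect to need care with is the initial adjustment of $\beta$, since $\Cross$ is defined as a characteristic polynomial rather than a minimal polynomial; but once one observes that the $n$ Galois conjugates of $\alpha$ produce the $n$ Frobenius conjugates of $\chi$, the characteristic polynomial is simply a power of the minimal polynomial, and the matching of cross ratios by choice of root is immediate.
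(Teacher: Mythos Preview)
The paper does not actually prove this statement: it is quoted verbatim from \cite{Howe2024} and closed with a \qed, so there is no in-paper proof to compare against. Your argument is therefore being judged on its own merits, and it is correct. The forward direction is immediate from $\PGL_2$-invariance of the cross ratio together with the fact that $\FF_q$-rational transformations commute with Frobenius. For the converse, your key steps --- replacing $\beta_0$ by a Frobenius conjugate so that the two cross ratios literally coincide, using sharp $3$-transitivity to produce a unique $\Gamma\in\PGL_2(\Fbar_q)$ matching three consecutive conjugates, using equality of cross ratios to get $\Gamma(\alpha^{q^3})=\beta^{q^3}$, and then comparing $\Gamma$ with $\Gamma^\sigma$ on the triple $(\alpha^q,\alpha^{q^2},\alpha^{q^3})$ to force $\Gamma\in\PGL_2(\FF_q)$ --- are exactly the standard Galois-descent argument one expects here, and each step is sound. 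Your closing remark about the characteristic polynomial being a power of the minimal polynomial handles the only real subtlety (that $\chi$ need not generate $\FF_{q^n}$) correctly.
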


%%%%%%%%%%%%%%%%%%%%%%%%%%%%%%%%%%%%%%%%%%%%%%%%%%%%%%%%%%%%%%%%%%%%%%%%%%%%%%%%
%%%%%%%%%%%%%%%%%%%%%%%%%%%%%%%%%%%%%%%%%%%%%%%%%%%%%%%%%%%%%%%%%%%%%%%%%%%%%%%%
\subsection{Orbit labels and orbit representatives for 
\texorpdfstring{$\PGL_2(\FF_q)$}{PGL2(Fq)} in 
\texorpdfstring{$\PGL_2(\FF_{q^2})$}{PGL2(Fq2)}}
\label{SS:cosetreps}

In our algorithm, we will need to have an explicit list of (right) coset
representatives for $\PGL_2(\FF_q)$ in $\PGL_2(\FF_{q^2})$, as well as an
easy-to-compute invariant that determines whether two elements of 
$\PGL_2(\FF_{q^2})$ lie in the same coset of $\PGL_2(\FF_q)$. We begin with the
explicit list.

An element of $\PGL_2(\FF_{q^2})$ is determined by where it sends $\infty$, $0$, 
and $1$, and given any three distinct elements of $\PP^1(\FF_{q^2})$, there is
an element of $\PGL_2(\FF_{q^2})$ that sends $\infty$, $0$, and $1$ to those
three elements. Thus, as in~\cite{Howe2024}, we may represent elements of
$\PGL_2(\FF_{q^2})$ by triples $(\zeta,\eta,\theta)$ of pairwise distinct
elements of $\PP^1(\FF_{q^2})$, indicating the images of $\infty$, $0$, and~$1$.

\begin{proposition}[Proposition~5.1 from~\cite{Howe2024}]
\label{P:CosetRepsPGL2}
Let $q$ be a prime power, let $\omega$ be an element of 
$\FF_{q^2}\setminus\FF_q$ and let $\gamma$ be a generator for the 
multiplicative group of $\FF_{q^2}$. Let $B$ be the set 
$\{(\omega\gamma^i + \omega^q)/(\gamma^i+1) \ \vert \ 0\le i < q-1\bigr\}.$ 
The following elements give a complete set of unique coset representatives for
the left action of $\PGL_2(\FF_q)$ on $\PGL_2(\FF_{q^2})$\textup{:}
\begin{enumerate}[label=\textup{\arabic*}.,ref=\arabic*]
\item \label{CRPGL21}
      $(\infty, 0, 1)$\textup{;}
\item \label{CRPGL22}
      $\bigl\{ (\infty, 0, \omega + a) \ \vert \ a \in \FF_q \bigr\}$\textup{;}
\item \label{CRPGL23}
      $\bigl\{ (\infty, \omega, \theta) \ \vert \ \theta \in \FF_{q^2} \text{\ with\ } \theta \neq \omega \bigr\}$\textup{;}
\item \label{CRPGL24}
      $\bigl\{ (\omega, \omega^q, \theta) \ \vert \ \theta\in B \bigr\}$\textup{;}
\item \label{CRPGL25}
      $\bigl\{ (\omega, \eta, \theta) \ \vert \ \eta\in B,  \theta \in \PP^1(\FF_{q^2})
      \text{\ with\ } \theta \neq \omega \text{\ and\ }\theta\ne \eta \bigr\}$.\qed
\end{enumerate}
\end{proposition}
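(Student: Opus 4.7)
The plan is to identify each element of $\PGL_2(\FF_{q^2})$ with the ordered triple $(\zeta,\eta,\theta)$ of pairwise distinct points of $\PP^1(\FF_{q^2})$ that it sends $(\infty,0,1)$ to, as the paragraph preceding the proposition indicates. Under this identification, left multiplication by $g\in\PGL_2(\FF_q)$ becomes the diagonal action $(\zeta,\eta,\theta)\mapsto(g(\zeta),g(\eta),g(\theta))$, so the problem reduces to enumerating orbit representatives for this diagonal action. I would do this by a standard successive-stabilizer procedure: first pick orbit representatives for $\zeta$; then, for each such $\zeta$, let its stabilizer act on $\eta$ and pick orbit representatives; and finally let the stabilizer of $(\zeta,\eta)$ act on $\theta$.

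The two $\PGL_2(\FF_q)$-orbits on $\PP^1(\FF_{q^2})$ are $\PP^1(\FF_q)$ and its complement, with natural representatives $\infty$ and $\omega$. If $\zeta=\infty$, the stabilizer is the affine group $\{z\mapsto az+b : a\in\FF_q^\times,\ b\in\FF_q\}$, which is transitive both on $\FF_q$ and on $\FF_{q^2}\setminus\FF_q$, splitting things into two subcases. For representative $\eta=0$, the residual stabilizer is the diagonal torus acting on $\FF_{q^2}^\times$ by scaling, producing the representative $\theta=1$ for the orbit $\FF_q^\times$ and the representatives $\theta=\omega+a$ with $a\in\FF_q$ for the orbits in $\FF_{q^2}\setminus\FF_q$ (via the decomposition $\FF_{q^2}=\FF_q\oplus\FF_q\omega$). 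For representative $\eta=\omega$, the residual stabilizer is trivial because $\infty$, $\omega$, $\omega^q$ are three distinct points it must fix, so $\theta$ is unrestricted in $\FF_{q^2}\setminus\{\omega\}$. These give items \ref{CRPGL21}, \ref{CRPGL22}, and \ref{CRPGL23}.

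If instead $\zeta=\omega$, the stabilizer in $\PGL_2(\FF_q)$ is the pointwise stabilizer of the pair $\{\omega,\omega^q\}$, a non-split torus whose $\FF_q$-points are canonically identified with the norm-one subgroup $\mu_{q+1}\subset\FF_{q^2}^\times$. Under the $\FF_{q^2}$-rational coordinate change $\psi(z)=(\omega z+\omega^q)/(z+1)$, which sends $\infty\mapsto\omega$ and $0\mapsto\omega^q$, this stabilizer becomes the multiplication action of $\mu_{q+1}$ on $\FF_{q^2}^\times$; its $q-1$ orbits on $\FF_{q^2}^\times$ are the cosets of $\mu_{q+1}$, with representatives $\gamma^i$ for $0\le i<q-1$, and pushing these back through $\psi$ produces exactly the set $B$. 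If $\eta=\omega^q$, the residual stabilizer is all of $\mu_{q+1}$, so $\theta$ ranges over $B$, giving item \ref{CRPGL24}; otherwise $\eta$ itself ranges over $B$, the residual stabilizer is trivial because $\mu_{q+1}$ acts freely on $\FF_{q^2}^\times$, and $\theta$ is free in $\PP^1(\FF_{q^2})\setminus\{\omega,\eta\}$, giving item \ref{CRPGL25}.

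The main obstacle is the non-split torus case: one must correctly identify the stabilizer of $\omega$ with $\mu_{q+1}$, check that the Galois twist of the split torus is implemented by conjugation by $\psi$, and arrange the formula so that the orbit representatives match the given expression for $B$. The rest of the argument is essentially bookkeeping. As a sanity check, the total count of representatives is $1+q+(q^2-1)+(q-1)+(q-1)(q^2-1)=q^3+q=q(q^2+1)$, which equals the index $[\PGL_2(\FF_{q^2}):\PGL_2(\FF_q)]$, confirming that these five families really do exhaust the coset space.
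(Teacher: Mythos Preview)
Your argument is correct. Note, however, that the paper does not actually prove this proposition: it is quoted verbatim from~\cite{Howe2024} and marked with a \qed, and the paper refers the reader to \cite[\S5]{Howe2024} for the proof. Judging from the description of that proof given in Definition~\ref{D:cosetlabels} and the surrounding text, the argument there proceeds exactly as you do, by successively normalizing $\zeta$, then $\eta$, then $\theta$ using the residual stabilizers. So your approach matches the cited one.
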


The proof of Proposition~\ref{P:CosetRepsPGL2} given in~\cite[\S5]{Howe2024}
shows how we can determine the orbit representative attached to a given element
$\Gamma$ of $\PGL_2(\FF_{q^2})$, and we encourage the reader to consult it.
Unfortunately, determining this representative involves computing a discrete 
logarithm. For our main algorithm we are not allowing ourselves enough memory to
keep a table of logarithms, and we will be needing to identify orbits often
enough that computing discrete logarithms directly would take too much time.
Therefore, we introduce an orbit invariant 
$\CL\colon \PGL_2(\FF_{q^2})\to \FF_{q^2}\times\FF_{q^2}\times\FF_{q^2}$ that is
easier to compute.

\begin{definition}
\label{D:orbitlabels}
Let $q$ be a prime power, let $\omega$ be an element of 
$\FF_{q^2}\setminus\FF_q$ and let $\gamma$ be a generator for the 
multiplicative group of $\FF_{q^2}$. Given $\Gamma\in \PGL_2(\FF_{q^2})$, 
represented by a triple $(\zeta_0,\eta_0,\theta_0)$ of elements of 
$\PP^1(\FF_{q^2})$, we define $\CL(\Gamma)$ as follows:
\begin{enumerate}
\item If $\zeta_0$, $\eta_0$, and $\theta_0$ are all elements of $\PP^1(\FF_q)$, we
      define $\CL(\Gamma)\colonequals (0,0,1)$.
\item If $\zeta_0$ and $\eta_0$ are elements of $\PP^1(\FF_q)$ but $\theta_0$ is
      not, we compute the orbit representative $(\infty,0,\omega+a)$ of $\Gamma$
      using the procedure in~\cite[\S5]{Howe2024} and set 
      $\CL(\Gamma)\colonequals (0,0,\omega+a)$.
\item If $\zeta_0$ lies in $\PP^1(\FF_q)$ but $\eta_0$ does not, we compute the
      orbit representative $(\infty,\omega,\theta)$ of $\Gamma$ using the
      procedure in~\cite[\S5]{Howe2024} and set 
      $\CL(\Gamma)\colonequals (0,\omega,\theta)$.
\item If $\zeta_0$ does not lie in $\PP^1(\FF_q)$, we use elements of 
      $\PGL_2(\FF_q)$ to move $\Gamma$ to an element of the form 
      $(\omega,\eta,\theta)$ using the procedure in~\cite[\S5]{Howe2024}.
      \begin{enumerate}
      \item If $\eta = \omega^q$ then we set
            $\CL(\Gamma)\colonequals (1,0,\Norm_{\FF_{q^2}/\FF_q}(\Phi(\theta)))$,
            where 
            $\Phi\colonequals \twobytwo{-1}{\phantom{-}\omega^q}{\phantom{-}1}{-\omega\phantom{^q}}$
            is as in~\cite[\S5]{Howe2024}. Note that if $g$ is an element of
            $\PGL_2(\FF_q)$ that fixes $\omega$ and $\omega^q$, then 
            $\Phi(g\theta)$ differs from $\Phi(\theta)$ by a multiplicative 
            factor whose norm to $\FF_q$ is equal to~$1$, so $\CL(\Gamma)$ is 
            well defined.
      \item If $\eta \ne \omega^q$ then we set
            $\CL(\Gamma)\colonequals (1,\Norm_{\FF_{q^2}/\FF_q}(\Phi(\eta)),\Phi(\theta)/\Phi(\eta))$.
            Again we note that if we replace $\eta$ and $\theta$ by $g\eta$ and
            $g\theta$ for an element $g$ of $\PGL_2(\FF_q)$ that fixes $\omega$,
            then neither the second nor third entries of the triple 
            $\CL(\Gamma)$ given above will change.
      \end{enumerate}
\end{enumerate}
We call the value $\CL(\Gamma)$ the \emph{orbit label} of $\Gamma$.
\end{definition}

\begin{proposition}
\label{P:orbitlabel}
Two elements of $\PGL_2(\FF_{q^2})$ lie in the same $\PGL_2(\FF_q)$ orbit if and
only if their orbit labels are equal.
\end{proposition}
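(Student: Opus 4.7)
The plan is to prove the two implications separately: for the forward direction I show that $\CL$ is constant on left cosets of $\PGL_2(\FF_q)$ in $\PGL_2(\FF_{q^2})$; for the reverse, I show that the canonical coset representatives of Proposition~\ref{P:CosetRepsPGL2} all receive distinct labels. Both arguments proceed case by case along Definition~\ref{D:cosetlabels}.

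For well-definedness, cases (1)--(3) are immediate because the canonical representative is uniquely determined by the procedure of \cite[\S5]{Howe2024}. Case~(4) is the substantive one. After one uses $\PGL_2(\FF_q)$ to move $\zeta_0$ to $\omega$, the residual freedom is the stabilizer $H$ of $\omega$ in $\PGL_2(\FF_q)$, and here a Galois argument is the key step: an $\FF_q$-rational M\"obius transformation fixing $\omega$ automatically fixes $\sigma(\omega) = \omega^q$, so conjugation by $\Phi$ (which sends $\omega\mapsto\infty$ and $\omega^q\mapsto 0$) identifies $H$ with the diagonal subgroup $\bigl\{\twobytwo{\lambda}{0}{0}{1}\bigr\}$ of $\PGL_2(\FF_{q^2})$, subject to $\Norm_{\FF_{q^2}/\FF_q}(\lambda) = 1$. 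The norm-one constraint itself comes from the identity $\sigma(\Phi) = s\Phi$, where $s = \twobytwo{0}{1}{1}{0}$ swaps $0$ and $\infty$; this forces $\sigma$ to send $\twobytwo{\lambda}{0}{0}{1}$ to $\twobytwo{\lambda^{-1}}{0}{0}{1}$ in $\PGL_2$, so $\lambda^q = \lambda^{-1}$. Since such a $\lambda$ scales both $\Phi(\eta)$ and $\Phi(\theta)$ by $\lambda$, the quantity $\Norm(\Phi(\theta))$ in case~4(a) and the pair $\Norm(\Phi(\eta))$, $\Phi(\theta)/\Phi(\eta)$ in case~4(b) are manifestly invariant.

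For injectivity I evaluate $\CL$ explicitly on each canonical representative from Proposition~\ref{P:CosetRepsPGL2}. The first coordinate of $\CL$ separates the block of cases (1)--(3) from case~(4); within each block the second and third coordinates distinguish the subcases by direct inspection. The one nontrivial computation is for case~(4): a short simplification yields $\Phi\bigl((\omega\gamma^i + \omega^q)/(\gamma^i + 1)\bigr) = \gamma^i$, and since the powers $\gamma^{i(q+1)}$ for $i = 0, 1, \ldots, q-2$ are $q-1$ distinct elements of the $(q-1)$-element group $\FF_q^\times$, they exhaust it, so $\Norm \circ \Phi$ is a bijection from $B$ onto $\FF_q^\times$. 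Hence in case~4(a) the third coordinate of $\CL$ pins down the canonical $\theta_B \in B$, and in case~4(b) the pair $(N, R)$ determines $\eta$ (via $N$) and then $\theta$ (via $R$). The principal obstacle throughout is the Galois-theoretic identification of $H$ with the norm-one diagonal matrices; once that is in hand the remainder is routine manipulation of M\"obius transformations.
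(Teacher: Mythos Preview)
Your proof is correct and follows the same two-part strategy as the paper: show $\CL$ is constant on $\PGL_2(\FF_q)$ orbits, then show it separates the canonical coset representatives of Proposition~\ref{P:CosetRepsPGL2}. The paper's own proof is a two-sentence sketch deferring the forward direction to the well-definedness remarks already embedded in Definition~\ref{D:cosetlabels} and the proof of \cite[Proposition~5.1]{Howe2024}, and dismissing the reverse direction as ``direct computation''; you have simply written out both halves in full, including the explicit Galois identification of the stabilizer of $\omega$ with the norm-one diagonal subgroup and the verification that $\Phi$ carries $B$ bijectively onto a set with distinct norms.
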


\begin{proof}
The proof of \cite[Proposition~5.1]{Howe2024} and the discussion above show that
that $\CL(\Gamma)$ depends only on the orbit of~$\Gamma$. On the other hand, 
direct computation shows that the orbit representatives given in 
Proposition~\ref{P:CosetRepsPGL2} have different orbit labels.
\end{proof}

%%%%%%%%%%%%%%%%%%%%%%%%%%%%%%%%%%%%%%%%%%%%%%%%%%%%%%%%%%%%%%%%%%%%%%%%%%%%%%%%
%%%%%%%%%%%%%%%%%%%%%%%%%%%%%%%%%%%%%%%%%%%%%%%%%%%%%%%%%%%%%%%%%%%%%%%%%%%%%%%%
\subsection{Orbit labels and orbit representatives for 
\texorpdfstring{$\PGL_2$}{PGL2} acting on pairs of distinct
degree-\texorpdfstring{$2$}{2} places}
\label{SS:orbits22}

Analogously to the situation in the preceding section, for our main algorithm we
will need to have orbit representatives and orbit labels for the action of
$\PGL_2(\FF_q)$ on degree-$4$ effective divisors of $\Pover{\FF_q}$ that are the
sum $P_1+P_2$ of two distinct places of degree~$2$. Such divisors correspond to
homogenous quartics $f\in k[x,y]$ that factor into the product $f_1 f_2$ of two
distinct irreducible monic quadratics, with $\Div f_i = P_i$. Orbit 
representatives for the action of $\PGL_2(\FF_q)$ on such quartics are provided 
by~\cite[Theorems~3.4 and~3.8]{Howe2024}, so our goal here is simply to provide
an easily-computable orbit label.

Given a quartic $f$ that factors into the product $f_1 f_2$ of two distinct 
irreducible monic homogeneous quadratics, we write $f_1 = x^2 + sxy + ty^2$ and 
$f_2 = x^2 + uxy + vy^2$, and we define $\lambda(f)$ by the formula
\[
\lambda(f)\colonequals \frac{(s-u)(sv-tu) + (t-v)^2}{(s^2-4t)(u^2-4v)}\,.
\]
If $P_1$ and $P_2$ are the divisors of $\Pover{\FF_q}$ corresponding to $f_1$ 
and $f_2$, we also set $\lambda(P_1+P_2) \colonequals \lambda(f)$.

\begin{lemma}
The function $\lambda$ is constant on $\PGL_2(\FF_q)$ orbits of homogeneous
quartics that factor as the product of distinct monic irreducible quadratics,
and it takes different values on different orbits.
\end{lemma}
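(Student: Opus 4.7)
The plan is to find an intrinsic interpretation of $\lambda(f)$ in terms of the four roots of $f_1 f_2$ and to derive both claims from it. Let $\alpha_1, \alpha_2 \in \FF_{q^2}$ be the roots of $f_1$ and $\beta_1, \beta_2 \in \FF_{q^2}$ the roots of $f_2$. Then $s^2 - 4t = (\alpha_1 - \alpha_2)^2$ and $u^2 - 4v = (\beta_1 - \beta_2)^2$, and a direct expansion of $(s-u)(sv-tu) + (t-v)^2$ using $s = -(\alpha_1+\alpha_2)$, $t = \alpha_1\alpha_2$, $u = -(\beta_1+\beta_2)$, $v = \beta_1\beta_2$ identifies it with the resultant $\operatorname{Res}(f_1, f_2) = \prod_{i,j \in \{1,2\}} (\alpha_i - \beta_j)$. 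Hence
\[
\lambda(f) \;=\; \frac{\prod_{i,j \in \{1,2\}}(\alpha_i - \beta_j)}{(\alpha_1 - \alpha_2)^2 (\beta_1 - \beta_2)^2},
\]
and a short further manipulation shows this equals $\chi/(\chi-1)^2$, where $\chi = (\alpha_1-\beta_1)(\alpha_2-\beta_2)/\bigl((\alpha_1-\beta_2)(\alpha_2-\beta_1)\bigr)$ is the cross-ratio of the four roots; equivalently, $1/\lambda = \chi + 1/\chi - 2$.

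For the first assertion, invariance under $\PGL_2(\FF_q)$ is immediate from the root formula: a Möbius transformation $\gamma$ with matrix determinant $\Delta$ scales each difference $z_i - z_j$ by $\Delta/((cz_i+d)(cz_j+d))$, and the numerator and denominator of the displayed ratio each involve four such differences, so the $\Delta$ factors and the $(cz+d)$ factors cancel. (A point at infinity is handled by the standard homogenization of the formula, or by moving it off with a preliminary element of $\PGL_2(\FF_q)$.) I would then check that $\lambda$ is well defined on the divisor $P_1 + P_2$ rather than merely on the ordered data: the partition-preserving permutations of the four roots (swapping inside $\{\alpha_1, \alpha_2\}$, inside $\{\beta_1, \beta_2\}$, or exchanging the two pairs) act on $\chi$ only as $\chi \mapsto \chi^{\pm 1}$, and $\chi/(\chi-1)^2$ is manifestly invariant under $\chi \leftrightarrow 1/\chi$.

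For the second assertion, suppose $\lambda(f) = \lambda(f')$. From $1/\lambda = \chi + 1/\chi - 2$ the cross-ratios $\chi$ and $\chi'$ agree up to inversion, so after possibly swapping the two roots of $f'_1$ (which changes neither $f'$ nor its divisor) we may assume $\chi = \chi'$. The unique $\gamma \in \PGL_2(\Fbar_q)$ sending $(\alpha_1, \alpha_2, \beta_1) \mapsto (\alpha'_1, \alpha'_2, \beta'_1)$ then automatically sends $\beta_2 \mapsto \beta'_2$, because cross-ratios are preserved. Since the $q$-Frobenius of $\FF_{q^2}/\FF_q$ swaps the two roots of each $\FF_q$-irreducible quadratic in both configurations, $\operatorname{Frob}(\gamma)$ agrees with $\gamma$ on the ordered triple $(\alpha_1,\alpha_2,\beta_1)$, so $\operatorname{Frob}(\gamma)=\gamma$ and hence $\gamma \in \PGL_2(\FF_q)$; this shows $f$ and $f'$ lie in the same orbit. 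The main obstacle is the careful bookkeeping of the partition-preserving symmetries and their interaction with Frobenius, but once the root-theoretic formula for $\lambda$ is in hand, everything else is routine algebra.
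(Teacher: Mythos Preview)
Your proof is correct and takes a genuinely different route from the paper's. The paper argues characteristic by characteristic: in odd characteristic it observes that $1+4\lambda(f)=\mu(f)$ for an invariant $\mu$ already analyzed in the companion paper~\cite{Howe2024}, and in characteristic~$2$ it leaves invariance to the reader and checks injectivity by evaluating $\lambda$ on the explicit orbit representatives from~\cite[Theorem~3.8]{Howe2024}. Your argument, by contrast, is uniform in the characteristic and self-contained: the identification of the numerator with $\operatorname{Res}(f_1,f_2)$ yields the root formula, the rewriting $\lambda=\chi/(\chi-1)^2$ makes both invariance and the partition symmetry transparent, and the Galois-descent step (showing the Möbius map matching the two ordered quadruples is Frobenius-stable) replaces the appeal to an external list of representatives. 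What the paper's approach buys is brevity, at the cost of dependence on~\cite{Howe2024}; what your approach buys is a conceptual explanation that works in every characteristic and does not require knowing the orbit representatives in advance.
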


\begin{proof}
We note that in odd characteristic we have $1 + 4\lambda(f) = \mu(f)$, where 
$\mu$ is the function defined in Remark~3.5 of~\cite{Howe2024}. As is observed 
there, the function $\mu$ is constant on $\PGL_2(\FF_q)$ orbits of homogeneous
quartics of the type we are considering, and it takes different values on 
different orbits. Therefore the same is true of $\lambda$ when $q$ is odd.

We let the reader verify that the function $\lambda$ is constant on 
$\PGL_2(\FF_q)$ orbits of such quartics even in characteristic~$2$. On the other
hand, it is easy to verify that $\lambda$ takes distinct values on the orbit 
representatives for such quartics provided by~\cite[Theorem~3.8]{Howe2024}.
Thus, the lemma also holds when $q$ is even.
\end{proof}

%%%%%%%%%%%%%%%%%%%%%%%%%%%%%%%%%%%%%%%%%%%%%%%%%%%%%%%%%%%%%%%%%%%%%%%%%%%%%%%%
%%%%%%%%%%%%%%%%%%%%%%%%%%%%%%%%%%%%%%%%%%%%%%%%%%%%%%%%%%%%%%%%%%%%%%%%%%%%%%%%
%%%%%%%%%%%%%%%%%%%%%%%%%%%%%%%%%%%%%%%%%%%%%%%%%%%%%%%%%%%%%%%%%%%%%%%%%%%%%%%%
\section{A note about recursion}
\label{S:algorithm}

All of the main algorithms in this paper are defined recursively: The algorithm
to enumerate places of degree $n$ up to $\PGL_2$ uses the algorithm to enumerate
effective divisors of degree at most~$(n+1)/2$ up to $\PGL_2$, and the algorithm
to enumerate effective divisors of degree $n$ up to $\PGL_2$ uses the algorithms
to enumerate places of degree at most $n$ up to $\PGL_2$. Since all of these 
algorithms are designed to take space $O(\log q)$, there is no room for saving a 
list of all the divisors or places of a given degree, up to $\PGL_2$. So we have
to be specific about what we mean when, as a step in one algorithm, we say 
something like ``for every $\PGL_2$ orbit of effective divisors of degree~$d$,
do\ldots''

One easy-to-conceptualize solution to this issue is to note that every algorithm
that takes time $\Otilde(q^e)$ and space $O(\log q)$ to produce a list of
divisors --- let's call it \texttt{CompleteList(q)} --- can be converted into an
algorithm \texttt{NextElement(D,data)} that takes as input one list element $D$
(plus some auxiliary data) and outputs the next list element (plus some 
auxiliary data), and that also uses $O(\log q)$ space and a total time of 
$\Otilde(q^e)$ to run through the whole list one elements at a time.

The idea is simple: The steps in \texttt{NextElement(D,data)} are the same as in
\texttt{CompleteList(q)}, except that where \texttt{CompleteList(q)} would
output a list element~$D$, the algorithm \texttt{NextElement(D,data)} outputs
$D$ together with all the details of the algorithm's internal state --- the line
of code it is at, together with the values of all of the internal variables --- 
and then stops. Since \texttt{CompleteList(q)} is assumed to take space
$O(\log q)$, the amount of internal state is bounded by the same amount. When 
\texttt{NextElement(D,data)} is called again, with $D$ and its associated data
as input, it simply continues as \texttt{CompleteList(q)} would, until it is
time to output another list element. 

This means that in our algorithms we can include ``for'' statements like the 
example given at the beginning of this section, and we will do so without
further comment.

%%%%%%%%%%%%%%%%%%%%%%%%%%%%%%%%%%%%%%%%%%%%%%%%%%%%%%%%%%%%%%%%%%%%%%%%%%%%%%%%
%%%%%%%%%%%%%%%%%%%%%%%%%%%%%%%%%%%%%%%%%%%%%%%%%%%%%%%%%%%%%%%%%%%%%%%%%%%%%%%%
%%%%%%%%%%%%%%%%%%%%%%%%%%%%%%%%%%%%%%%%%%%%%%%%%%%%%%%%%%%%%%%%%%%%%%%%%%%%%%%%
\section{The structure of our argument}
\label{S:structure}

Our algorithm is recursive, and the proof that it runs correctly and within the
stated bounds on time and space is inductive. Since the various steps are
described in different sections, it will be helpful to outline the structure of
the induction here.

Theorems~1.1 and 3.8 of \cite{Howe2024} show that we can enumerate a complete
set of unique representatives for the action of $\PGL_2(\FF_q)$ on the 
degree-$4$ places of $\Pover{\FF_q}$ in time $O(q)$ and space $O(\log q)$. The
$\PGL_2(\FF_q)$ orbits of places of degree less than $4$ can be enumerated in 
time $O(\log q)$ and space $O(\log q)$, as can the $\PGL_2(\FF_q)$ orbits of 
effective divisors of degree less than~$4$. These results form the base case for
our induction.

In Section~\ref{S:divisors} we show that if, for every $d$ with $3\le d\le n$, 
we can enumerate a complete set of unique representatives for the action of
$\PGL_2(k)$ on places of degree~$d$ in time $\Otilde(q^{d-3})$ and space
$O(\log q)$, then we can enumerate a complete set of unique representatives for
the action of $\PGL_2(k)$ on the effective divisors of degree $n$ in time
$\Otilde(q^{n-3})$ and space $O(\log q)$. Note that this is enough to prove that
Corollary~\ref{C:divisors} follows from Theorem~\ref{T:places}.

For our induction, we suppose that we can enumerate a complete set of unique 
representatives for the action of $\PGL_2(\FF_q)$ on places of degree $d$ in 
time $\Otilde(q^{d-3})$ and space $O(\log q)$ for every $d$ with 
$3\le d\le n-1$. By the argument in Section~\ref{S:divisors}, this means that
for every such $d$ we can also enumerate a complete set of unique 
representatives for the action of $\PGL_2(\FF_q)$ on effective divisors of
degree $d$ in time $\Otilde(q^{d-3})$ and space $O(\log q)$.

In Section~\ref{S:simpleodd} we show that if $n$ is odd, then this induction
hypothesis shows that we can enumerate orbit representatives for the action of 
$\PGL_2(\FF_q)$ on places of degree $n$ in time $\Otilde(q^{n-3})$ and space 
$O(\log q)$. In Section~\ref{S:simpleeven} we prove the same thing for even~$n$.
Together, the arguments in these sections provide a proof of 
Theorem~\ref{T:places}.

%%%%%%%%%%%%%%%%%%%%%%%%%%%%%%%%%%%%%%%%%%%%%%%%%%%%%%%%%%%%%%%%%%%%%%%%%%%%%%%%
%%%%%%%%%%%%%%%%%%%%%%%%%%%%%%%%%%%%%%%%%%%%%%%%%%%%%%%%%%%%%%%%%%%%%%%%%%%%%%%%
%%%%%%%%%%%%%%%%%%%%%%%%%%%%%%%%%%%%%%%%%%%%%%%%%%%%%%%%%%%%%%%%%%%%%%%%%%%%%%%%
\section{From \texorpdfstring{$\PGL_2$}{PGL2} orbits of places to 
\texorpdfstring{$\PGL_2$}{PGL2} orbits of effective divisors}
\label{S:divisors}

In this section we assume that for every $d$ with $3\le d\le n$, we have an
algorithm that can enumerate orbit representatives for $\PGL_2(k)$ acting on the
degree-$d$ places of $\Pover{k}$ in time $\Otilde(q^{d-3})$ and using 
$O(\log q)$ space, where $q=\#k$. We show that under this assumption, we can 
produce an algorithm that can enumerate orbit representatives for $\PGL_2(k)$ 
acting on the effective degree-$n$ divisors of $\Pover{k}$ in time
$\Otilde(q^{n-3})$ and using $O(\log q)$ space.

Let $D$ be an effective divisor of $\Pover{k}$ and write $D = \sum m_P P$, where
the sum is over the places $\Pover{k}$, the coefficients $m_P$ are nonnegative 
integers, and all but finitely many of the $m_P$ are zero. The \emph{support} of
$D$ is the divisor 
\[
\Supp D \colonequals \sum_{P \, : \, m_P\ne 0} P.
\]
The divisor $D$ is \emph{reduced} if it is equal to its own support. We begin by
showing how to enumerate reduced divisors.

%%%%%%%%%%%%%%%%%%%%%%%%%%%%%%%%%%%%%%%%%%%%%%%%%%%%%%%%%%%%%%%%%%%%%%%%%%%%%%%%
%%%%%%%%%%%%%%%%%%%%%%%%%%%%%%%%%%%%%%%%%%%%%%%%%%%%%%%%%%%%%%%%%%%%%%%%%%%%%%%%
\subsection{Enumerating reduced effective divisors 
up to \texorpdfstring{$\PGL_2$}{PGL2}}
\label{SS:reduced}

Suppose $D$ is a reduced effective divisor of degree at most $d$, so that
$D = P_1 + \cdots P_r$ for distinct places $P_i$, and so that if we let $m_i$
be the degree of $P_i$, then $m_1 + \cdots + m_r \le d$. 
Following~\cite[\S 7]{Howe2024}, we say that sequence $(m_i)_i$, listed in
non-increasing order, is the \emph{Galois type} of $D$. The \emph{degree} of a
Galois type is the sum of its elements. We will enumerate orbit representatives
for the action of $\PGL_2(\FF_q)$ on reduced effective divisors of degree at
most $d$ by enumerating each Galois type separately.

We have three strategies that together cover all but five Galois types. The
first strategy works with Galois types $(m_i)_i$ for which $m_1\ge 3$. The
second strategy is for Galois types with $m_1 = m_2 = 2$. The third strategy is
for Galois types with $m_1\le 2$, $m_2 = 1$, and with at least three $m_i$ equal
to~$1$. The Galois types not covered by these strategies are $(1)$, $(2)$,
$(1,1)$, $(2,1)$, and $(2,1,1)$. We dispose of these small cases first.

\subsubsection{Small Galois types.} 
Let $q_2$ be an irreducible monic quadratic polynomial in $k[x,y]$. We leave it
to the reader to verify the following:
\begin{itemize}
\item There is only one $\PGL_2(\FF_q)$ orbit of the divisors of Galois type
      $(1)$, and it can be represented by the homogeneous polynomial~$y$.
\item There is only one $\PGL_2(\FF_q)$ orbit of the divisors of Galois type
      $(1,1)$, and it can be represented by the homogeneous polynomial~$xy$.
\item There is only one $\PGL_2(\FF_q)$ orbit of the divisors of Galois type
      $(2)$, and it can be represented by the homogeneous polynomial~$q_2$.
\item There is only one $\PGL_2(\FF_q)$ orbit of the divisors of Galois type
      $(2,1)$, and it can be represented by the homogeneous polynomial~$yq_2$.
\item A complete set of unique representatives for the $\PGL_2(\FF_q)$ orbits of
      divisors of Galois type $(2,1,1)$ is provided
      by~\cite[Theorem~3.6]{Howe2024}.
\end{itemize}

%%%%%%%%%%%%%%%%%%%%%%%%%%%%%%%%%%%%%%%%%%%%%%%%%%%%%%%%%%%%%%%%%%%%%%%%%%%%%%%%
\subsubsection{Galois types with $m_1\ge 3$.}
Choose total orderings on the set of polynomials in $\FF_q[x]$ and on the set of
homogeneous polynomials in $\FF_q[x,y]$. Having chosen these orderings, it makes
sense to speak of one element of one of these sets being ``smaller'' than 
another element of the same set. The following algorithm makes use of cross 
polynomials, as discussed in Section~\ref{SS:crosspolynomials}.

\begin{algorithm}
\label{A:3andup}
Orbit representatives for $\PGL_2(\FF_q)$ acting on the reduced effective
divisors of $\Pover{\FF_q}$ of Galois type $M= (m_1,\ldots,m_r)$, where
$m_1\ge 3$ and the degree of $M$ is at most $n$.
\begin{alglist}
\algin  A prime power $q$ and a Galois type $M = (m_1,\ldots,m_r)$ of degree
        $d\le n$ with $m_1\ge 3$.
\algout A list of monic separable homogeneous polynomials in $\FF_q[x,y]$ of
        degree~$d$ whose associated divisors form a complete set of unique
        representatives for $\PGL_2(\FF_q)$ acting on the divisors of Galois
        type~$M$.
\item Let $M'$ be the Galois type $(m_2,\ldots,m_r)$.        
\item \label{3andup3}
      For every orbit representative $P_1$ for the action of $\PGL_2(\FF_q)$ on
      the degree-$m_1$ places of $\Pover{\FF_q},$ and for every reduced
      effective divisor $D' = P_2 + \cdots + P_r$ of $\Pover{\FF_q}$ of Galois
      type $M'$ that does not contain the place $P_1$, do:
      \begin{algsublist}
      \item Set $\chi\colonequals\Cross(P_1)$.
      \item If $D'$ contains a place $P$ of degree $m_1$ with $\Cross(P)$
            smaller than $\chi$, continue to the next pair $(P_1,D')$.
      \item \label{3andup3c}
            Let $f$ be the monic degree-$d$ homogeneous polynomial representing
            the divisor $D\colonequals P_1 + D'$.
      \item \label{3andup3d}
            Set $F\colonequals\{\Gamma(f)\}$, where $\Gamma$ ranges over the
            elements of $\PGL_2(\FF_q)$ that send some degree-$m_1$ place of $D$
            with cross polynomial $\chi$ to $P_1$.
      \item If $f$ is the smallest element of $F$, output $f$.
      \end{algsublist}
\end{alglist}
\end{algorithm}

\begin{remark}
Note that the $\Gamma$ considered in step~\ref{3andup3}\ref{3andup3d} include
the $\Gamma$ in the $\PGL_2(\FF_q)$ stabilizer of~$P_1$.
\end{remark}

\begin{proposition}
\label{P:3andup}
Algorithm~\textup{\ref{A:3andup}} produces a complete set of unique 
representatives for the orbits of $\PGL_2(\FF_q)$ acting on the reduced
effective divisors of $\Pover{\FF_q}$ of Galois type~$M$. It runs in time 
$\Otilde(q^{d-3})$, measured in arithmetic operations in $\FF_q$, and requires
$O(\log q)$ space.
\end{proposition}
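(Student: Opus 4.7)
The plan is to verify that Algorithm~\ref{A:3andup} outputs exactly one polynomial per $\PGL_2(\FF_q)$-orbit of divisors of type $M$, and then to tally the runtime and space.

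For completeness, I would take any reduced divisor $D$ of type $M$, let $\chi_0$ be the smallest cross polynomial among the degree-$m_1$ places of $D$, and choose any place $Q$ in $D$ with $\Cross(Q) = \chi_0$. Theorem~\ref{T:cross} yields a $\Gamma_0\in\PGL_2(\FF_q)$ sending $Q$ to the orbit representative $P_1$ with $\Cross(P_1)=\chi_0$; then $\Gamma_0(D) = P_1 + D'$, and $D'$ contains no degree-$m_1$ place with cross polynomial smaller than $\chi_0$, so the pair $(P_1, D')$ is visited by the loops.

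The heart of the argument is uniqueness, and the key step is to show that the set $F$ assembled in step~\ref{3andup3}\ref{3andup3d} depends only on the orbit of $D$ (given the fixed outer choice of $P_1$). Calling a pair $(P_1, D')$ \emph{admissible} if the step~\ref{3andup3}b test has passed, I would prove that $F$ equals the set of polynomials attached to admissible divisors equivalent to $D$. One direction is immediate from \eqref{EQ:divgamma=gammadiv} and the $\PGL_2$-invariance of cross polynomials. For the other, any $\Gamma\in\PGL_2(\FF_q)$ with $\Gamma(D) = D''$ (for admissible $D''$) restricts to a bijection between the minimal-cross degree-$m_1$ places of $D$ and those of $D''$, and since $P_1$ lies in the latter set by admissibility, some minimal-cross place $Q$ of $D$ satisfies $\Gamma(Q) = P_1$, placing $\Gamma(f)$ in $F$. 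Once $F$ is shown to be orbit-invariant, the fixed total ordering on polynomials selects a unique minimum, and exactly one admissible representative of each orbit is output.

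For complexity, the outer loop over orbit representatives $P_1$ runs $\Otilde(q^{m_1-3})$ times under the standing hypothesis of Section~\ref{S:structure}, and for each outer iteration the inner loop enumerates the $\Theta(q^{d-m_1})$ reduced divisors $D'$ of type $M'$ by nested enumeration of monic irreducible polynomials of each required degree, using $O(\log q)$ space via the \emph{NextElement} device of Section~\ref{S:algorithm}. Per pair, computing $\chi$, testing step~\ref{3andup3}b, building $F$, and comparing polynomials take time polynomial in $\log q$: for $m_1\ge 3$ the stabilizer of $P_1$ in $\PGL_2(\FF_q)$ injects via its action on the $m_1$ geometric points into the cyclic group generated by Frobenius, so $|F|\le r\cdot m_1 = O(n^2)$. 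Multiplying yields the claimed $\Otilde(q^{d-3})$ time and $O(\log q)$ space. The main obstacle I expect is the orbit-invariance of $F$ when $D$ has several degree-$m_1$ places tied for the minimum cross polynomial, which is precisely what forces $F$ to be a genuine multi-element set and makes the minimum test the right tie-breaker.
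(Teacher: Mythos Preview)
Your proposal is correct and follows essentially the same approach as the paper. Your uniqueness argument---proving that the set $F$ of step~\ref{3andup3}\ref{3andup3d} coincides with the set of polynomials attached to all admissible divisors in the orbit, so that the minimum test selects exactly one---and your explicit bound $|F|\le r\cdot m_1$ via the injection of the stabilizer of $P_1$ into the Frobenius-cyclic group are more fully worked out than in the paper's proof, which simply asserts correctness in one sentence and absorbs the size of $F$ into ``polynomial in $\log q$.''
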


\begin{proof}
The algorithm produces correct results, because every effective divisor $D$ of 
Galois type $M$ is in the same orbit as divisors of the form $P_1 + D'$ where
$P_1$ is in the given set of orbit representatives and $D'$ is a divisor of
Galois type $M'$, none of whose constituent places has degree $m_1$ and has
cross polynomial smaller than that of $P_1$. Every such orbit representative
will be considered in steps~\ref{3andup3}\ref{3andup3c} 
and~\ref{3andup3}\ref{3andup3d}, but only one will satisfy the requirement of
step~\ref{3andup3}\ref{3andup3d} and be output.

The time it takes to process each $(P_1,D')$ pair is polynomial in~$\log q$. The
time to generate the representatives $P_1$ is $\Otilde(q^{m_1-3})$, by the
global hypotheses of this section. The time to generate all the reduced
effective divisors of Galois type $M'$ is $\Otilde(q^{d-m_1})$. All told, the
time required for the algorithm is therefore $\Otilde(q^{d-3})$.

Generating the representatives $P_1$ takes space $O(\log q)$ by hypothesis.
Generating the effective divisors of type $M'$ can also be done in this space 
(keep in mind that $n$ is fixed for the purposes of this algorithm, so any
dependencies on $n$ can be absorbed into constant factors). And the substeps of 
step~\ref{3andup3} can also be accomplished in $O(\log q)$ space. Thus, the
whole algorithm requires only $O(\log q)$ space.
\end{proof}

%%%%%%%%%%%%%%%%%%%%%%%%%%%%%%%%%%%%%%%%%%%%%%%%%%%%%%%%%%%%%%%%%%%%%%%%%%%%%%%%
\subsubsection{Galois types with $m_1 = m_2 = 2$.}
In the preceding subsection we ``absorbed'' all of the action of $\PGL_2(\FF_q)$
into a single place of degree~$m_1$. Now we do something similar with pairs of 
places of degree~$2$. 

\begin{algorithm}
\label{A:22}
Orbit representatives for $\PGL_2(\FF_q)$ acting on the reduced effective
divisors of $\Pover{\FF_q}$ of Galois type $M= (m_1,\ldots,m_r)$, where 
$m_1 = m_2 = 2$ and the degree of $M$ is at most $n$.
\begin{alglist}
\algin  A prime power $q$ and a Galois type $M = (m_1,\ldots,m_r)$ of degree
        $d\le n$ with $m_1 = m_2 = 2$.
\algout A list of monic separable homogeneous polynomials in $\FF_q[x,y]$ of
        degree~$d$ whose associated divisors form a complete set of unique
        representatives for $\PGL_2(\FF_q)$ acting on the divisors of Galois
        type~$M$. 
\item Let $M'$ be the Galois type $(m_3,\ldots,m_r)$.        
\item For every sum $P_1+P_2$ of degree-$2$ places of $\Pover{\FF_q}$ listed in 
      Theorems~3.4 and~3.8 of~\cite{Howe2024}, and for every reduced effective 
      divisor $D' = P_3 + \cdots + P_r$ of $\Pover{\FF_q}$ of Galois type $M'$
      that does not contain either of the places $P_1$ and~$P_2$, do:
      \begin{algsublist}
      \item Set $\chi\colonequals\lambda(P_1+P_2)$, where $\lambda$ is the 
            function defined in \S\ref{SS:orbits22}.
      \item If $P_1 + P_2 + D'$ contains two degree-$2$ places $Q_1$ and $Q_2$
            with $\lambda(Q_1+Q_2)$ smaller than $\chi$, continue to the next
            pair $(P_1+P_2,D')$.
      \item Let $f$ be the monic degree-$d$ homogeneous polynomial representing
            the divisor $D\colonequals P_1 + P_2 + D'$.
      \item Set $F\colonequals\{\Gamma(f)\}$, where $\Gamma$ ranges over the
            elements of $\PGL_2(\FF_q)$ that send some pair $(Q_1,Q_2)$ of
            degree-$2$ places of $D$ with $\lambda(Q_1+Q_2) = \chi$ to 
            $P_1 + P_2$.
      \item If $f$ is the smallest element of $F$, output $f$.
      \end{algsublist}
\end{alglist}
\end{algorithm}

\begin{proposition}
\label{P:22}
Algorithm~\textup{\ref{A:22}} produces a complete set of unique representatives
for the orbits of $\PGL_2(\FF_q)$ acting on the reduced effective divisors of
$\Pover{\FF_q}$ of Galois type~$M$. It runs in time $\Otilde(q^{d-3})$, measured
in arithmetic operations in $\FF_q$, and requires $O(\log q)$ space.
\end{proposition}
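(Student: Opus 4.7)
The plan is to mirror the proof of Proposition~\ref{P:3andup}, with the modification that the orbit-absorbing object is now a pair of distinct degree-$2$ places equipped with the invariant $\lambda$ rather than a single place of degree $m_1$ equipped with its cross polynomial.

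For correctness, I would first argue that every reduced effective divisor $D$ of Galois type $M = (m_1,\ldots,m_r)$ with $m_1=m_2=2$ can be written as $P_1 + P_2 + D'$, where $P_1+P_2$ is the sum of two of the degree-$2$ constituent places of $D$ and $D'$ is the sum of the rest. Among all such decompositions, pick one in which $\lambda(P_1+P_2)$ is smallest (there may be ties). By Proposition~\ref{P:orbitlabel} (or more precisely, by the orbit-invariance property of $\lambda$ established in \S\ref{SS:orbits22}) together with \cite[Theorems~3.4 and~3.8]{Howe2024}, there is a $\Gamma\in\PGL_2(\FF_q)$ carrying $P_1+P_2$ to one of the listed orbit representatives for pairs of distinct degree-$2$ places. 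Then $\Gamma(D) = \Gamma(P_1+P_2) + \Gamma(D')$ is a divisor of the form considered by the algorithm, so every orbit is visited at least once. The ``smallest element of $F$'' test guarantees that each orbit outputs exactly one representative, since two orbit-equivalent inputs produce the same set $F$.

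For the time and space bounds, I would count as follows. The number of orbit representatives for pairs of distinct degree-$2$ places provided by \cite[Theorems~3.4 and~3.8]{Howe2024} is $O(q)$; by the induction hypothesis of Section~\ref{S:structure}, the number of reduced effective divisors $D'$ of Galois type $M'=(m_3,\ldots,m_r)$ can be enumerated in time $\widetilde{O}(q^{d-4})$ and space $O(\log q)$ (by iterated application of the reduced-divisor algorithms of this section on an input of degree $d-4$). The work done for each pair $(P_1+P_2,D')$ is polynomial in $\log q$: computing $\lambda$ values of the $O(1)$ degree-$2$ pairs inside $D$, forming $f$, and enumerating the finite stabilizer-coset set $F$ all take $\operatorname{poly}(\log q)$ time, since $n$ and hence the number of degree-$2$ places in $D$ is fixed. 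Multiplying gives $\widetilde{O}(q^{d-3})$ total time. The space is $O(\log q)$ because all enumerations use the streaming reformulation of Section~\ref{S:algorithm} and each local variable is a bounded-size object over $\FF_q$.

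The main obstacle in this argument is the uniqueness half of correctness: one must check that when two choices of $(P_1+P_2,D')$ come from orbit-equivalent $D$s, they actually produce identical sets $F$ and hence the same minimum element, so that exactly one of them is output. This is where using $\lambda$ as a pair-invariant, rather than attempting to label pairs with an ad hoc discrete-log-dependent quantity, is essential: the minimum-$\lambda$ restriction cuts the candidate list down to a single $\PGL_2(\FF_q)$-equivalence class of subpairs of $D$, and then every $\Gamma$ carrying any such subpair to the chosen representative $P_1+P_2$ is collected into $F$, which guarantees that $F$ depends only on the orbit of $D$ and not on the particular input representative.
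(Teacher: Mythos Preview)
Your proposal is correct and takes essentially the same approach as the paper, which simply declares the proof ``entirely analogous to that of Proposition~\ref{P:3andup}.'' One minor point: the enumeration of all reduced divisors $D'$ of Galois type $M'$ is a direct enumeration of tuples of places (taking $\widetilde{O}(q^{d-4})$ time and $O(\log q)$ space) rather than an application of the induction hypothesis or the orbit-representative algorithms of this section, but this does not affect the validity of your argument.
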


\begin{proof}
The proof is entirely analogous to that of Proposition~\ref{P:3andup}.
\end{proof}

%%%%%%%%%%%%%%%%%%%%%%%%%%%%%%%%%%%%%%%%%%%%%%%%%%%%%%%%%%%%%%%%%%%%%%%%%%%%%%%%
\subsubsection{Galois types with $m_2 = 1$ and with at least three $1$s}
This is perhaps the most straightforward case to deal with, since we can
normalize our divisors by demanding that three of their degree-$1$ places be
$\infty$, $0$, and~$1$.

\begin{algorithm}
\label{A:111}
Orbit representatives for $\PGL_2(\FF_q)$ acting on the reduced effective
divisors of $\Pover{\FF_q}$ of Galois type $M= (m_1,\ldots,m_r)$, where 
$m_2 = 1$, at least three of the $m_i$ are equal to~$1$, and the degree of $M$
is at most $n$.
\begin{alglist}
\algin  A prime power $q$ and a Galois type $M = (m_1,\ldots,m_r)$ of degree
        $d\le n$ with $m_2 = 1$ and with at least three $m_i$ equal to~$1$.
\algout A list of monic separable homogeneous polynomials in $\FF_q[x,y]$ of
        degree~$d$ whose associated divisors form a complete set of unique
        representatives for $\PGL_2(\FF_q)$ acting on the divisors of Galois
        type~$M$.
\item Let $M'$ be the Galois type $(m_1,\ldots,m_{r-3})$.
\item Let $D_0$ be the degree-$3$ divisor consisting of the places at $\infty$,
      $0$, and~$1$.
\item For every reduced effective divisor $D' = P_1 + \cdots + P_{r-3}$ of 
      $\Pover{\FF_q}$ of Galois type $M'$ that is disjoint from $D_0$, do:
      \begin{algsublist}
      \item Let $f$ be the monic degree-$d$ homogeneous polynomial representing
            the divisor $D\colonequals D_0 + D'$.
      \item Set $F\colonequals\{\Gamma(f)\}$, where $\Gamma$ ranges over the
            elements of $\PGL_2(\FF_q)$ that send some triple $(Q_1,Q_2,Q_3)$ of
            degree-$1$ places of $D$ to $D_0$.
      \item If $f$ is the smallest element of $F$, output $f$.
      \end{algsublist}
\end{alglist}
\end{algorithm}

\begin{proposition}
\label{P:111}
Algorithm~\textup{\ref{A:111}} produces a complete set of unique representatives
for the orbits of $\PGL_2(\FF_q)$ acting on the reduced effective divisors of
$\Pover{\FF_q}$ of Galois type~$M$. It runs in time $\Otilde(q^{d-3})$, measured
in arithmetic operations in $\FF_q$, and requires $O(\log q)$ space.
\end{proposition}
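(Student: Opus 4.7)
The plan is to follow the same three-part template used in Propositions \ref{P:3andup} and \ref{P:22}: establish existence of a normalized representative in each orbit, establish uniqueness via the ``smallest element of $F$'' filter, and finally account for the running time and space.

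For \emph{existence}, I would use the fact that $M$ has at least three entries equal to $1$. Since the $m_i$ are listed in non-increasing order, the last three entries $m_{r-2}, m_{r-1}, m_r$ are all equal to $1$, so any divisor $D$ of Galois type $M$ contains at least three degree-$1$ places in its support. Because $\PGL_2(\FF_q)$ acts sharply $3$-transitively on $\PP^1(\FF_q)$, there is an element $\Gamma\in\PGL_2(\FF_q)$ sending any chosen triple of degree-$1$ places of $D$ to the places at $\infty$, $0$, and $1$; then $\Gamma(D) = D_0 + D'$ for a reduced divisor $D'$ of Galois type $M'$ disjoint from $D_0$, so this normalized representative appears in the outer loop of the algorithm.

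For \emph{uniqueness}, I would argue as in the preceding propositions. Suppose $D = D_0 + D'$ and $\widetilde D = D_0 + \widetilde D'$ lie in the same $\PGL_2(\FF_q)$ orbit, with monic homogeneous polynomials $f$ and $\widetilde f$ respectively. Any $\PGL_2(\FF_q)$ element $\Gamma$ with $\Gamma(D) = \widetilde D$ carries the degree-$1$ places of $D$ bijectively to those of $\widetilde D$, so $\Gamma$ sends some triple of degree-$1$ places of $D$ back to $D_0 = (\infty, 0, 1)$ inside $\widetilde D$. Hence $\widetilde f \in F$, and symmetrically $f$ lies in the analogous set built from $\widetilde D$. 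Thus the two candidate polynomials share the same set of ``competitors'', and the rule ``output $f$ only if it is the smallest element of $F$'' selects exactly one polynomial per orbit.

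For \emph{complexity}, the main point is that the outer loop enumerates reduced divisors $D'$ of Galois type $M'$, not orbit representatives. Such divisors are in bijection with tuples of distinct places of the corresponding degrees, so they can be listed with nested loops over the degree-$m_i$ places of $\Pover{\FF_q}$ (simply enumerating all monic irreducible polynomials of each relevant degree and rejecting coincidences with $D_0$ and with earlier loop variables). Since each degree $m_i$ is bounded by $n$, this enumeration takes $\Otilde(q^{d-3})$ time and uses $O(\log q)$ space, where the ``next element'' conversion of Section \ref{S:algorithm} keeps the nested loops within the space budget. For each $D'$, the set $F$ has size bounded by a constant depending only on $M$ (the number of ordered triples of degree-$1$ places in $D$ is at most a constant depending on the number of $1$s in $M$, which is bounded by $n$), so the inner computation of $f$, of $F$, and of the minimality check costs only polylog$(q)$. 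The expected main obstacle --- carefully bounding the size of $F$ and verifying the space usage of the nested enumeration --- is essentially routine once one has formalized the ``next element'' idiom from Section \ref{S:algorithm}, so the total cost is $\Otilde(q^{d-3})$ time and $O(\log q)$ space, as claimed.
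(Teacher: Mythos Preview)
Your proposal is correct and follows essentially the same approach the paper intends: the paper's own proof simply reads ``The proof is again essentially the same as that of Propositions~\ref{P:3andup} and~\ref{P:22},'' and you have faithfully unpacked that template (normalize via sharp $3$-transitivity, verify that the sets $F$ coincide across orbit-equivalent normalized divisors so the minimality filter selects a unique representative, and bound time and space via the $q^{d-3}$ enumeration of $D'$ and the $O_n(1)$ bound on $|F|$).
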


\begin{proof}
The proof is again essentially the same as that of
Propositions~\ref{P:3andup} and~\ref{P:22}.
\end{proof}

%%%%%%%%%%%%%%%%%%%%%%%%%%%%%%%%%%%%%%%%%%%%%%%%%%%%%%%%%%%%%%%%%%%%%%%%%%%%%%%%
%%%%%%%%%%%%%%%%%%%%%%%%%%%%%%%%%%%%%%%%%%%%%%%%%%%%%%%%%%%%%%%%%%%%%%%%%%%%%%%%
\subsection{Enumerating effective divisors up to \texorpdfstring{$\PGL_2$}{PGL2}}
\label{SS:nonreduced}

Given the results of the preceding subsection, it is a simple matter to produce
a complete set of unique representatives for the action of $\PGL_2(\FF_q)$ on
the degree-$n$ effective divisors of~$\PP^1_{\/\FF_q}$. The support of such a
divisor is a reduced effective divisor of degree at most~$n$, and the algorithms
we have given above will produce a complete set of unique representatives for
these in time $\Otilde(q^{n-3})$ and space $O(\log q)$.

For each divisor $S$ in this set, we do the following. The divisor $S$ is simply
a collection of places $P$ of $\Pover{k}$, and our task consists of two steps:
First, we must find all sets of positive integers $\{m_P\}_{P\in S}$ such that
$\sum_{P\in S} m_P \deg P = n$. Each such set gives a divisor $D$ of degree~$n$.
Then, we must take the list of all such $D$ for the given~$S$, check to see
which $D$ are in the same $\PGL_2(\FF_q)$ orbit, and then choose one element
from each orbit to output.

Finding all the sets $\{m_P\}_{P\in S}$ can be done in constant time, since $n$
is fixed, so we have a bounded number of degree-$n$ divisors associated to a 
given~$S$. The number of pairs of such divisors is also bounded in terms of~$n$.
Comparing two such divisors to see whether they are in the same $\PGL_2(k)$
orbit can be done in time polynomial in $\log q$; the most direct method
involves finding explicit isomorphisms between the residue fields of two places
of the same degree, and we can accomplish this either by using a polynomial
factorization algorithm, or by choosing our data structure for places of degree
$d$ to include an explicit Galois-stable collection of $d$ points in a fixed
copy of $\FF_{q^d}$ (see the following section). In any case, this can be done
in the required time and space.

This sketch shows that we can output a complete set of unique representatives
for the action of $\PGL_2(k)$ on the set of effective divisors of degree $n$ in 
time~$\Otilde(q^{n-3})$ and using $O(\log q)$ space, under the assumption that
we can accomplish the same task for places of degree at most~$n$.

%%%%%%%%%%%%%%%%%%%%%%%%%%%%%%%%%%%%%%%%%%%%%%%%%%%%%%%%%%%%%%%%%%%%%%%%%%%%%%%%
%%%%%%%%%%%%%%%%%%%%%%%%%%%%%%%%%%%%%%%%%%%%%%%%%%%%%%%%%%%%%%%%%%%%%%%%%%%%%%%%
%%%%%%%%%%%%%%%%%%%%%%%%%%%%%%%%%%%%%%%%%%%%%%%%%%%%%%%%%%%%%%%%%%%%%%%%%%%%%%%%
\section{Enumerating places of odd degree up to \texorpdfstring{$\PGL_2$}{PGL2}}
\label{S:simpleodd}

Here we present an algorithm for enumerating representatives for the orbits of
$\PGL_2(k)$ acting on the places of $\PP^1$ of odd degree~$n$ over a finite
field~$k$, using $O(\log q)$ space and $\Otilde(q^{n-3})$ time, where $q = \#k$. 
Our hypothesis throughout this section is that we can accomplish this same task
for places of every degree~$d$ smaller than~$n$, and by the results of the
preceding section this hypothesis implies that we can also enumerate 
representatives for the orbits of $\PGL_2(k)$ acting on the effective
divisors of $\PP^1$ of degree less than~$n$.

The algorithm requires us to manipulate effective divisors of $\Pover{k}$ of
degree up to~$n$, so we need to specify the data structure we use to represent
such divisors. Let $q = \#k$. Before we begin the algorithm proper, we compute
copies of the fields $\FF_{q^i}$ for $1< i\le n$. We represent places of
degree $i>1$ by Galois-stable collections of $i$ elements of $\FF_{q^i}$ that
lie in no proper subfield, we represent places of degree $1$ by elements of
$\FF_q$ together with the symbol $\infty$, and we represent divisors as 
formal sums of such places. This representation makes it easy to check whether 
two divisors of the same degree are in the same orbit under the action of
$\PGL_2(\FF_q)$, without having to factor polynomials or find their roots in 
extension fields.

By ``computing a copy of the field $\FF_{q^i}$,'' we mean computing a basis 
$a_1,\ldots,a_i$ for $\FF_{q^i}$ as a vector space over $\FF_q$ and a 
multiplication table for this basis. We choose our basis so that $a_1 = 1$.

\begin{algorithm}
\label{A:odd}
Orbit representatives for the action of $\PGL_2(\FF_q)$ on the degree-$n$ places
of $\Pover{\FF_q}$, where $n\ge 5$ is a fixed odd integer.
\begin{alglist}
\algin  A prime power $q$.
\algout A sequence of monic irreducible polynomials in $\FF_q[x,y]$ of 
        degree~$n$ whose associated divisors form a complete set of unique
        representatives for the action of $\PGL_2(\FF_q)$ on the degree-$n$
        places of $\Pover{\FF_q}$.
\item \label{oddone}
      Compute copies of $\FF_{q^i}$ for $1\le i\le n$ so that we can represent
      divisors as described above.
\item \label{oddtwo}
      Choose an efficiently computable total ordering $<$ on the set of rational
      functions on  $\Pover{\FF_q}$.
\item \label{oddtwopointfive}
      Output the places listed in Theorem~\textup{\ref{T:FF1}} for the given
      values of $n$ and~$q$.
\item \label{oddthree}      
      For every orbit representative $D$ for the action of $\PGL_2(\FF_q)$ on
      the effective divisors of $\Pover{\FF_q}$ of degree at least $3$ and at
      most $(n+1)/2$, do:
      \begin{algsublist}
      \item \label{oddthreea} 
            For every function $F$ obtained from Proposition~\ref{P:fixedpoints}
            applied to $D$, do:
            \begin{algsubsublist}
            \item \label{oddthreeai}
                  For every element $\Gamma$ of $\PGL_2(\FF_q)$ that fixes~$D$,
                  compare $\Gamma\circ F\circ\Gamma^{-1}$ to $F$ in the ordering 
                  from step~\ref{oddtwo}. If $\Gamma\circ F\circ\Gamma^{-1} < F$ 
                  for any such $\Gamma$, then skip to the next value of $F$.
                  (See the proof of Theorem~\ref{T:odd} for more details about 
                  this step.)
            \item \label{oddthreeaii}
                  Compute the $n$-fold iterate $F^{(n)}$ of $F$ and set $g$ to
                  be the numerator of the rational function $x/y - F^{(n)}$.
            \item \label{oddthreeaiii}
                  Set $L$ to be an empty list.
            \item \label{oddthreeaiv}
                  For every monic irreducible degree-$n$ factor $f$ of~$g$,
                  check to see whether $F$ is the Frobenius function for $f$. If
                  so, append the ordered pair $(\Cross f, f)$ to $L$.
            \item \label{oddthreeav}
                  Sort $L$. For every pair $(\Cross f, f)$ on the list such that
                  $\Cross f$ does not appear as the first element of an earlier
                  pair, output the place associated to $f$.
            \end{algsubsublist}
      \end{algsublist}
\end{alglist}
\end{algorithm}

\begin{theorem}
\label{T:odd}
Algorithm~\textup{\ref{A:odd}} outputs a complete set of unique representatives
for the action of $\PGL_2(\FF_q)$ on the degree-$n$ places of $\Pover{\FF_q}$.
With step~\textup{\ref{oddthree}\ref{oddthreea}\ref{oddthreeai}} implemented as
described below, it runs in time $\Otilde(q^{n-3})$ and requires $O(\log q)$
space.
\end{theorem}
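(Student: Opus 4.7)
The plan is to verify correctness of Algorithm~\ref{A:odd} and then bound its time and space usage.

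For completeness, let $P$ be any degree-$n$ place of $\Pover{\FF_q}$ with Frobenius function $F$. If $\deg F = 1$, Theorem~\ref{T:FF1} produces an equivalent representative in step~\ref{oddtwopointfive}. Otherwise $2 \le \deg F = r \le (n-1)/2$, so the Frobenius divisor $\Phi(F)$ has degree $r+1$ between $3$ and $(n+1)/2$; by Corollary~\ref{C:chosenfixedpoints} we may replace $P$ with an equivalent $P'$ whose Frobenius divisor is one of the orbit representatives $D$ iterated in step~\ref{oddthree}. The Frobenius function $F'$ of $P'$ is then among those enumerated by Proposition~\ref{P:fixedpoints} applied to $D$, and Theorem~\ref{T:FFinversion} guarantees that $P'$ appears among the degree-$n$ factors of $g$ in step~\ref{oddthreeaiv}. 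For uniqueness, two equivalent outputs cannot arise from distinct orbit representatives $D$ in step~\ref{oddthree}, by Lemma~\ref{L:PGLequivariance}. Two equivalent outputs sharing the same $D$ but different Frobenius functions $F_1, F_2$ must satisfy $F_2 = \Gamma F_1 \Gamma^{-1}$ for some $\Gamma$ fixing $D$, and step~\ref{oddthreeai} retains only the ordering-minimum $F$ in each stabilizer-orbit. Finally, two factors of $g$ for the same $(D, F)$ lie in a common $\PGL_2(\FF_q)$ orbit exactly when they share a cross polynomial (Theorem~\ref{T:cross}), and step~\ref{oddthreeav} outputs exactly one per class.

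For complexity, the inductive hypothesis and Section~\ref{S:divisors} enumerate the divisors $D$ in step~\ref{oddthree} in time $\Otilde(q^{(n-5)/2})$ and space $O(\log q)$. For each $D$ of degree $r+1$, Corollary~\ref{C:Ffromfixedpoints} bounds the enumeration of $F$ by $O(q^r)$ arithmetic operations; summed over $D$, the dominant contribution at $r+1 = (n+1)/2$ yields $\Otilde(q^{n-3})$ pairs $(D, F)$. For each pair, step~\ref{oddthreeaii} computes $F^{(n)}$, of degree bounded by $((n-1)/2)^n$, a constant; step~\ref{oddthreeaiv} factors a polynomial of bounded degree in time polynomial in $\log q$ (deterministically under the extended Riemann hypothesis, probabilistically otherwise); and step~\ref{oddthreeav} sorts a list of bounded length. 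Via the next-element paradigm of Section~\ref{S:algorithm}, all enumerations fit in $O(\log q)$ space, as do the intermediate bounded-degree polynomials and the fixed tables representing $\FF_{q^i}$ for $i \le n$.

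The main obstacle is implementing step~\ref{oddthreeai} within a polylogarithmic per-pair budget, since the stabilizer of $D$ in $\PGL_2(\FF_q)$ can have order $\Theta(q^2)$ --- for instance when $D = (r+1)P_\infty$, whose stabilizer is the ``$ax+b$'' group. The plan is to avoid enumerating the stabilizer set-theoretically and to exploit its algebraic structure instead. When the support of $D$ contains at least three distinct places, the geometric stabilizer is finite of bounded order, so its $\FF_q$-points can be enumerated directly. In the remaining cases --- $D$ supported on one or two places --- the stabilizer is a one- or two-parameter algebraic subgroup of $\PGL_2$ that we describe by an explicit parametrization; writing $\Gamma F \Gamma^{-1}$ as a rational function of those parameters reduces the question of whether some conjugate is strictly smaller than $F$ to a closed-form comparison or a bounded symbolic manipulation polynomial in $\log q$. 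With this dispatch each comparison runs in the required time, and the overall algorithm meets the $\Otilde(q^{n-3})$ bound in $O(\log q)$ space.
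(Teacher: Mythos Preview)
Your correctness argument matches the paper's: completeness via Corollary~\ref{C:chosenfixedpoints} and Theorem~\ref{T:FFinversion}, uniqueness via Lemma~\ref{L:PGLequivariance}, the stabilizer filter, and Theorem~\ref{T:cross}. (One small imprecision: the right dichotomy is whether the \emph{degree} of $\Supp D$ is at least~$3$, not whether $D$ is supported on at least three places; a single degree-$3$ place, or two places of total degree~$3$, already has bounded stabilizer.)

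The genuine gap is in your handling of step~\ref{oddthree}\ref{oddthreea}\ref{oddthreeai} for divisors with large stabilizer. You assert that writing $\Gamma F\Gamma^{-1}$ as a rational function of the stabilizer parameters ``reduces the question of whether some conjugate is strictly smaller than $F$ to a closed-form comparison \ldots\ polynomial in $\log q$,'' but this is not justified: a total ordering on rational functions over $\FF_q$ is not an algebraic condition, so knowing the coefficients of $\Gamma F\Gamma^{-1}$ as polynomials in $(a,b)$ does not by itself give a polylogarithmic minimality test over the $\Theta(q)$ or $\Theta(q^2)$ parameter values. The paper does not attempt a uniform polylogarithmic implementation of this step. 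Instead it simply iterates over the full stabilizer for each~$F$, at cost $\Otilde(q)$ or $\Otilde(q^2)$ per function; since only $O(1)$ divisors $D$ have support of degree at most~$2$ (these are tabulated explicitly), the total cost over all such divisors is $\Otilde(q^{(n+3)/2})$, which lies within $\Otilde(q^{n-3})$ once $n\ge 9$. For $n=5$ and $n=7$ this bound is too weak, and there the paper does exactly what you gesture at but do not carry out: for each of the handful of bad divisors it computes explicit canonical forms for $F$ under the stabilizer action (for example, for $n=5$ and $D=3P_\infty$ every orbit contains a unique $(x^2+sy^2)/(xy)$ with $s\in\{1,\nu\}$), reducing the number of functions to process to~$O(q)$. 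Your argument needs either this explicit case analysis for small~$n$, or an actual proof of the polylogarithmic comparison claim; as written, the time bound is not established.
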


\begin{proof}
First we prove that the output is correct: We must show that every orbit of 
$\PGL_2(\FF_q)$ acting on the degree-$n$ places of $\Pover{\FF_q}$ has exactly
one representative included in the output of the algorithm.

The orbits of places whose Frobenius divisors have degree~$2$ will appear in the
output from step~\ref{oddtwopointfive}.

Consider an orbit $\CP$ of places whose Frobenius divisors have degree at 
least~$3$. Among the Frobenius divisors of the polynomials representing elements
of $\CP$, there is exactly one that appears as a divisor $D$ in 
step~\ref{oddthree}. Now consider the set $S$ elements of $\CP$ whose Frobenius
divisors are equal to $D$, and let $\CF$ be the set of Frobenius functions of
elements of~$S$. Exactly one element of $\CF$ satisfies the condition checked in
step~\ref{oddthree}\ref{oddthreea}\ref{oddthreeai}: namely, the smallest element
$F$ of $\CF$ under the ordering chosen in step~\ref{oddtwo}.

Now we narrow our consideration to the elements of $\CP$ whose Frobenius
functions are equal to this~$F$. Each such element is represented by a unique
monic irreducible homogenous polynomial~$f$. Let $\alpha$ be a root of~$f$.
Since $F$ is the Frobenius function for~$f$ we have  
$\proj{\alpha^q}{1} = F(\proj{\alpha}{1})$, and it follows that 
$\proj{\alpha^{q^i}}{1} = F^{(i)}(\proj{\alpha}{1})$ for every~$i$. In 
particular, 
\[
\proj{\alpha}{1} = \proj{\alpha^{q^n}}{1} = F^{(n)}(\proj{\alpha}{1})\,,
\]
so $\proj{\alpha}{1}$ is a fixed point of $F^{(n)}$, and $f$ must divide the
numerator of the polynomial $g$ defined in 
step~\ref{oddthree}\ref{oddthreea}\ref{oddthreeaii}.
Therefore, $f$ occurs as one of the polynomials in 
step~\ref{oddthree}\ref{oddthreea}\ref{oddthreeaiv} for which $(\Cross f, f)$ is
appended to~$L$.

We know that the algorithm will output the place associated to some polynomial
$f'$ with $\Cross f' = \Cross f$. From Theorem~\ref{T:cross}, we know that this
place is in the $\PGL_2(\FF_q)$ orbit of $\CP$. This shows that at least one
representative from each $\PGL_2(\FF_q)$ orbit of places is included in the
output; it remains to show that no orbit is represented more than once.

Suppose $f'$ is an irreducible homogenous polynomial in the same $\PGL_2(\FF_q)$
orbit as~$f$, say $f' = \Gamma(f)$. We would like to show that if $f'\ne f$ then
$f'$ will not appear in the output of the algorithm. To see this is the case, we
let $F'$ be the Frobenius function for $f'$, so that 
$F' = \Gamma\circ F\circ \Gamma^{-1}$ and $\Phi(F') = \Gamma(\Phi(F))$, by 
Lemma~\ref{L:PGLequivariance}. If $\Phi(F')\ne \Phi(F)$ then $f'$ will not
appear in the output, because in step~\ref{oddthree} we choose only one
representative $D$ for each $\PGL_2(\FF_q)$ orbit of divisors. So let us assume
that $\Phi(F') =  \Phi(F)$; that is, we assume that $\Gamma$ fixes 
$D\colonequals\Phi(F)$.

This means that $\Gamma$ is one of the elements of $\PGL_2(\FF_q)$ that we
consider in step~\ref{oddthree}\ref{oddthreea}\ref{oddthreeai}. So in order for 
$f'$ to appear in the output, we must have $F'=F$. This means that $f'$ and $f$
will both appear on the list $L$ produced in 
steps~\ref{oddthree}\ref{oddthreea}\ref{oddthreeaiii} 
and~\ref{oddthree}\ref{oddthreea}\ref{oddthreeaiv}. So after we sort $L$ in
step~\ref{oddthree}\ref{oddthreea}\ref{oddthreeav}, only the smaller of $f$ and
$f'$ will be output. This shows that no orbit is represented more than once in 
our output, so the output is correct.

To analyze the time and space requirements of the algorithm, we must say a
little more about how step~\ref{oddthree}\ref{oddthreea}\ref{oddthreeai} can be
implemented; in particular, we must specify how to compute the 
$\PGL_2(\FF_q)$-stabilizer of~$D$. So consider an effective divisor $D$ of 
degree at least $3$ and at most~$(n+1)/2$.

Let $S$ denote the support of the divisor $D$. If the degree of $S$ is at 
least~$3$, then computing the stabilizer is straightforward, and its size is
bounded by $(\frac{n+1}{2})!$, which means that it is $O(1)$. So let us turn to
the case where $S$ has degree~$1$ or~$2$. First we list the divisors we must 
consider.

Let $P_\infty$ be the place at infinity, let $P_0$ be the place at~$0$, and let
$P_2$ be the place corresponding to an irreducible quadratic $x^2 - uxy + vy^2$,
which we fix for the course of this discussion. If the degree of $S$ is $1$ 
or~$2$, then we may assume that $S$ is either $P_\infty$, $P_\infty + P_0$, 
or~$P_2$. We then find that the divisors with supports of degree $1$ or $2$ that
we must consider, and their stabilizers, are as in 
Table~\ref{table:divisorsandstabilizers}.

\begin{table}
\caption{Representative effective divisors of $\Pover{\FF_q}$ of degree between
$3$ and $(n+1)/2$ with supports of degree at most $2$, together with their
$\PGL_2(\FF_q)$ stabilizers. Here $P_\infty$ and $P_0$ are the places at
$\infty$ and~$0$, respectively, and $P_2$ is the place corresponding to an 
irreducible quadratic $x^2 - uxy + vy^2$.}
\label{table:divisorsandstabilizers}
\centering
\begin{tabular}{l l l}
\toprule
Divisor                       & Conditions                     & Stabilizer\\
\midrule
$m P_\infty$                  & $3\le m\le \frac{n+1}{2}$      & $\bigl\{ \twobytwo{a}{b}{0}{1} \ \vert \ a \in \FF_q^\times, b\in\FF_q \bigr\}$ \\[2.5ex]
$m_\infty P_\infty + m_0 P_0$ & $1\le m_0 < m_\infty$          & $\bigl\{ \twobytwo{a}{0}{0}{1} \ \vert \ a \in \FF_q^\times \bigr\}$ \\
                              & $m_\infty+m_0\le\frac{n+1}{2}$ & \\[2ex]
$m P_\infty + m P_0$          & $2\le m\le\frac{n+1}{4}$       & $\bigl\{ \twobytwo{a}{0}{0}{1} \ \vert \ a \in \FF_q^\times \bigr\} \cup \bigl\{ \twobytwo{0}{a}{1}{0} \ \vert \ a \in \FF_q^\times \bigr\}$ \\[2.5ex]
$m P_2$                       & $2\le m\le\frac{n+1}{4}$       & $\bigl\{ \twobytwo{a}{-bv\phantom{+a}}{b}{-bu + a} \ \vert \ \proj{a}{b} \in \PP^1(\FF_q)\bigr\}$ \\[1ex]
                              &                                & \qquad\qquad$\cup \ \bigl\{ \twobytwo{a}{-au+bv}{b}{-a\phantom{u+bv}} \ \vert \ \proj{a}{b} \in \PP^1(\FF_q)\bigr\}$ \\
\bottomrule
\end{tabular}
\end{table}

Now we can analyze the time taken by the algorithm. The total time to compute
the orbit representatives $D$ in step~\ref{oddthree} is~$\Otilde(q^{(n-5)/2})$,
by the global hypotheses of this section, and there are $O(q^{(n-5)/2})$ such
divisors. To each $D$ there are associated at most $q^{(n-1)/2}$ functions~$F$, 
by Corollary~\ref{C:Ffromfixedpoints}.

First we consider the divisors whose support has degree at least~$3$. There are
$O(q^{(n-5)/2})$ of these divisors, and each gives rise to at most $q^{(n-1)/2}$
possible Frobenius functions~$F$. The total time to process all of these
divisors $D$ is therefore $\Otilde(q^{n-3})$.

Next we consider the divisors with support of degree $1$ or~$2$. If $D$ is one
of the $(n-3)/2$ divisors that appears in the first row of 
Table~\ref{table:divisorsandstabilizers}, there are at most $q^{(n-1)/2}$
associated functions $F$, and it will take time $\Otilde(q^2)$ to process each
one. Therefore, the total time to process all of these $D$ is
$\Otilde(q^{(n+3)/2})$. 

For the $O(n^2)$ divisors from the other rows of the table, there are again at
most $q^{(n-1)/2}$ associated functions $F$, and it takes time $\Otilde(q)$ to 
process each~$F$. Therefore, the total time to process all of these $D$ is
$\Otilde(q^{(n+1)/2})$. 

If $n\ge 9$, then $(n+3)/2\le n-3$, so the time it takes to process these
special divisors falls within our desired upper bound. For $n=7$, the time for
divisors with support of degree~$2$ falls within our desired upper bound, but
the time for divisors with support of degree~$1$ does not. And for $n=5$, even
the divisors with support of degree~$2$ take too long for us. So we must
consider the cases $n=5$ and $n=7$ separately, and modify our algorithm for
these cases.

Suppose $n=5$. We see that there are only two divisors we must consider, 
namely $3 P_\infty$ and $2 P_\infty + P_0$. Let us start with $D = 3 P_\infty$.

From Proposition~\ref{P:fixedpoints} we compute that the rational functions~$F$
with $\Phi(F) = D$ are the functions 
\[
F \colonequals \frac{x^2 + rxy + sy^2}{xy + ry^2}
\]
for $r\in \FF_q$ and $s\in \FF_q^\times$. If $q$ is odd, pick a nonsquare
$\nu\in \FF_q$. No matter the parity of $q$, choose $a\in \FF_q^\times$ so that
$sa^2$ is either $1$ or~$\nu$, set $b=ar$, and consider the element
$\Gamma = \twobytwo{a}{b}{0}{1}$ of the stabilizer of $D$. We compute that 
\[
\Gamma\circ F\circ\Gamma^{-1} = \frac{x^2 + sa^2 y^2}{xy}\,.
\]
For odd $q$, it is easy to check that the two functions $(x^2 + y^2)/(xy)$ and
$(x^2 + \nu y^2)/(xy)$ are not in the same $\PGL_2(\FF_q)$ orbit. Thus, to 
process the divisor $D = 3 P_\infty$ we need only consider at most $2$ specific
functions, each one taking time $\Otilde(1)$ to process.

Now suppose $D = 2P_\infty + P_0$. The functions $F$ with $\Phi(F) = D$ are
\[
F \colonequals \frac{x^2 + sxy}{xy + ry^2}
\]
where $s\ne r$ and $r\ne 0$. The stabilizer of $D$ in $\PGL_2(\FF_q)$ consists of the
elements $\Gamma = \twobytwo{a}{0}{0}{1}$ with $a$ nonzero, and we compute that 
\[
\Gamma\circ F\circ\Gamma^{-1} = \frac{x^2 + asxy}{xy + ary^2}\,.
\]
Therefore, up to the stabilizer of $D$, the only functions we have to consider
are $(x^2 + sxy)/(xy + y^2)$ (with $s\ne 1$). The total
time to process all of these functions is $\Otilde(q)$, which is within our
desired bound of $\Otilde(q^2)$.

Now we turn to the case $n=7$. The divisor $3P_\infty$ can be treated as
described above. Thus the only remaining divisor we must treat specially is
$D = 4 P_\infty$. We leave it to the reader to show that if $q$ is odd, then
each orbit of functions $F$ with $\Phi(F) = D$ under the action of the 
stabilizer of $D$ contains exactly one element in the set
\[
\Bigl\{
\frac{x^3 + rxy^2 + ry^3}{x^2y + ry^3} \ \Big\vert \ r\in \FF_q^\times
\Bigr\}
\cup
\Bigl\{
\frac{x^3 + ry^3}{x^2y} \ \Big\vert \ r\in S
\Bigr\}
\]
where $S$ is a set of representatives for $\FF_q^\times/\FF_q^{\times 3}$. The
total time to process these functions is $\Otilde(q)$, which is less than our 
goal of $\Otilde(q^4)$. On the other hand, if $q$ is even and if we choose an 
element $\nu\in\FF_q$ of absolute trace~$1$, then then each orbit of functions
$F$ with $\Phi(F) = D$ under the action of the stabilizer of $D$ contains
exactly one element in the set
\[
\Bigl\{
\frac{x^3 + x^2y + rxy^2 + sy^3}{x^2y + xy^2 + ry^3} \ \Big\vert \ r \in\{0,\nu\}, s\in \FF_q^\times
\Bigr\}
\cup
\Bigl\{
\frac{x^3 + ry^3}{x^2y} \ \Big\vert \ r\in S
\Bigr\}
\]
where $S$ is again a set of representatives for $\FF_q^\times/\FF_q^{\times 3}$.
The total time to process these functions is again $\Otilde(q)$.

We see that in every case, the algorithm can be implemented to take time
$\Otilde(q^{n-3})$. The fact that it requires space $O(\log q)$ is clear.
\end{proof}

%%%%%%%%%%%%%%%%%%%%%%%%%%%%%%%%%%%%%%%%%%%%%%%%%%%%%%%%%%%%%%%%%%%%%%%%%%%%%%%%
%%%%%%%%%%%%%%%%%%%%%%%%%%%%%%%%%%%%%%%%%%%%%%%%%%%%%%%%%%%%%%%%%%%%%%%%%%%%%%%%
%%%%%%%%%%%%%%%%%%%%%%%%%%%%%%%%%%%%%%%%%%%%%%%%%%%%%%%%%%%%%%%%%%%%%%%%%%%%%%%%
\section{Enumerating places of even degree up to 
\texorpdfstring{$\PGL_2$}{PGL2}}
\label{S:simpleeven}

In this section we present an algorithm for enumerating representatives for the
orbits of $\PGL_2(k)$ acting on the places of $\PP^1$ of even degree~$n$ over a
finite field~$k$, using $O(\log q)$ space and $\Otilde(q^{n-3})$ time, where 
$q = \#k$. As in the preceding section, our hypothesis throughout this section
is that we can accomplish this same task for places of every degree~$d$ smaller
than~$n$ --- but we will only need to use the case $d = n/2$ of this hypothesis.

The spirit of the algorithm is similar to that 
of~\cite[Algorithm~7.12]{Howe2024}. In that algorithm we produce a large list of
orbit representatives that includes roughly two entries for each orbit, and then
deduplicate the list by using cross polynomials. The main difference here is
that instead we take much more care in producing the list to begin with, so that
we do not have to use memory to store the list.

The algorithm makes use of the orbit labeling function $\CL$ from
Definition~\ref{D:orbitlabels}.

\begin{algorithm}
\label{A:even}
Orbit representatives for the action of $\PGL_2(\FF_q)$ on the degree-$n$ places
of $\PP^1$ over $\FF_q$, where $n\ge 6$ is a fixed even integer.
\begin{alglist}
\algin  A prime power $q$.
\algout A sequence of monic irreducible polynomials in $\FF_q[x,y]$ of 
        degree~$n$ whose associated divisors form a complete set of unique
        representatives for the action of $\PGL_2(\FF_q)$ on the degree-$n$
        places of $\Pover{\FF_q}$.
\item \label{evenone}
      Choose efficiently computable total orderings $<$ on the set
      $\FF_{q^2}\times \FF_{q^2}\times \FF_{q^2}$ and on the set of polynomials
      $\FF_{q^2}[x]$.
\item \label{eventwo}      
      For every orbit representative $P$ for the action of $\PGL_2(\FF_{q^2})$
      on the places of $\Pover{\FF_{q^2}}$ of degree~$n/2$, do:
      \begin{algsublist}
      \item \label{eventwoa}
            If $\Cross(P^{(q)}) < \Cross(P)$ then skip to the next value of~$P$.
      \item \label{eventwob}
            For every orbit representative $\Gamma$ from 
            Proposition~\ref{P:CosetRepsPGL2} for the action of $\PGL_2(\FF_q)$
            on $\PGL_2(\FF_{q^2})$, do:
            \begin{algsubsublist}
            \item \label{eventwobi}
                  For every $\Psi\in\PGL_2(\FF_{q^2})$ that fixes~$P$, compare 
                  $\CL(\Gamma\Psi)$ to $\CL(\Gamma)$. If 
                  $\CL(\Gamma\Psi)<\CL(\Gamma)$ for any such $\Psi$, skip to the
                  next value of $\Gamma$.
            \item \label{eventwobii}
                  If $\Cross(P^{(q)}) = \Cross(P)$, then for every
                  $\Psi\in\PGL_2(\FF_{q^2})$ that takes $P$ to~$P^{(q)}$, 
                  compare $\CL(\Gamma^{(q)}\Psi)$ to $\CL(\Gamma)$. If 
                  $\CL(\Gamma^{(q)}\Psi)<\CL(\Gamma)$ for any such $\Psi$, skip
                  to the next value of $\Gamma$.   
            \item \label{eventwobiii}
                  Let $f\in\FF_{q^2}[x,y]$ be the monic homogeneous polynomial 
                  associated to the place $P$. If $\Gamma(f)$ does not lie in
                  $\FF_q[x,y]$, output the product of $\Gamma(f)$ with its
                  conjugate $\Gamma^{(q)}(f^{(q)})$.
            \end{algsubsublist}
      \end{algsublist}
\end{alglist}      
\end{algorithm}

\begin{theorem}
\label{T:even}
Algorithm~\textup{\ref{A:even}} outputs a complete set of unique representatives
for the action of $\PGL_2(\FF_q)$ on the degree-$n$ places of $\Pover{\FF_q}$.
It runs in time $\Otilde(q^{n-3})$ and requires $O(\log q)$ space.
\end{theorem}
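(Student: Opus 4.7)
The plan hinges on the bijection between degree-$n$ places of $\Pover{\FF_q}$ (with $n$ even) and unordered pairs $\{Q, Q^{(q)}\}$ of distinct conjugate degree-$(n/2)$ places of $\Pover{\FF_{q^2}}$. Since $n$ is even, $\FF_{q^2}$ lies in the residue field $\FF_{q^n}$ of any degree-$n$ place, so the defining polynomial $f'\in\FF_q[x,y]$ factors over $\FF_{q^2}$ as $f\cdot f^{(q)}$ for a monic irreducible $f\in\FF_{q^2}[x,y]$ of degree $n/2$ with $f^{(q)}\ne f$. Because $\PGL_2(\FF_q)$ is a Frobenius-stable subgroup of $\PGL_2(\FF_{q^2})$, its action on such pairs is componentwise, and the task reduces to enumerating $\PGL_2(\FF_q)$-orbits of these pairs exactly once each.

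I would parameterize each pair by a choice of $\PGL_2(\FF_{q^2})$-orbit representative $P$ together with a coset representative $\Gamma$ from Proposition~\ref{P:CosetRepsPGL2}, associating to $(P,\Gamma)$ the pair $\{\Gamma(P),\Gamma(P)^{(q)}\}$. Step~\ref{eventwo}\ref{eventwob}\ref{eventwobiii} discards those $(P,\Gamma)$ with $\Gamma(f)\in\FF_q[x,y]$, enforcing that the two places are distinct. Two sources of over-counting then need to be resolved. First, $\Gamma(P)$ depends on $\Gamma$ only modulo the left action of $\PGL_2(\FF_q)$ and the right action of $\mathrm{Stab}_{\PGL_2(\FF_{q^2})}(P)$; step~\ref{eventwo}\ref{eventwob}\ref{eventwobi} handles this by keeping only the coset representative with the smallest orbit label in each $\mathrm{Stab}(P)$-orbit, where Proposition~\ref{P:orbitlabel} guarantees that $\CL$ is a faithful coset invariant. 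Second, swapping $Q$ and $Q^{(q)}$ gives the same unordered pair; if the $\PGL_2(\FF_{q^2})$-orbits of $P$ and $P^{(q)}$ are distinct, step~\ref{eventwo}\ref{eventwoa} uses $\Cross$ as a complete orbit invariant (Theorem~\ref{T:cross}) to retain only the smaller of the two, while if they coincide, then $Q^{(q)}=\Gamma^{(q)}\Psi(P)$ for some $\Psi$ in the transporter $\{\Psi:\Psi(P)=P^{(q)}\}$, and step~\ref{eventwo}\ref{eventwob}\ref{eventwobii} enforces minimality of $\CL(\Gamma)$ against $\CL(\Gamma^{(q)}\Psi)$ within the enlarged equivalence class.

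The hardest part will be verifying that these deduplication steps interlock correctly: a careful case analysis on whether $P$ and $P^{(q)}$ share a $\PGL_2(\FF_{q^2})$-orbit is needed, and in the coincident case one must check that the combined equivalence generated by $\mathrm{Stab}(P)$ and the transporter admits a unique minimum orbit label per class, which amounts to confirming that iterating the two swaps collapses back into a $\mathrm{Stab}(P)$-move using $P^{(q^2)}=P$. For complexity, the induction hypothesis provides the $\Theta(q^{n-6})$ representatives $P$ in total time $\Otilde(q^{n-6})$ and space $O(\log q)$; Proposition~\ref{P:CosetRepsPGL2} supplies $|\PGL_2(\FF_{q^2})|/|\PGL_2(\FF_q)|=q(q^2+1)=\Theta(q^3)$ coset representatives per $P$, each iterable in polylogarithmic time. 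Because $n/2\ge 3$, both $\mathrm{Stab}(P)$ and the transporter have $O(1)$ elements, so the inner loops of step~\ref{eventwo}\ref{eventwob} cost $\mathrm{polylog}(q)$ per $\Gamma$, giving total time $\Otilde(q^{n-6}\cdot q^3)=\Otilde(q^{n-3})$ and space $O(\log q)$, as required.
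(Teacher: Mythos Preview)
Your proposal is correct and follows essentially the same approach as the paper's proof: both reduce to enumerating $\PGL_2(\FF_q)\times\Gal(\FF_{q^2}/\FF_q)$-orbits of degree-$(n/2)$ places of $\Pover{\FF_{q^2}}$ via the factorization $f'=f\cdot f^{(q)}$, expand each $\PGL_2(\FF_{q^2})$-orbit representative $P$ using the coset representatives of Proposition~\ref{P:CosetRepsPGL2}, and deduplicate using the stabilizer check in step~\ref{eventwo}\ref{eventwob}\ref{eventwobi}, the $\Cross$ comparison in step~\ref{eventwo}\ref{eventwoa}, and the transporter check in step~\ref{eventwo}\ref{eventwob}\ref{eventwobii}. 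Your complexity analysis, including the observation that $n/2\ge 3$ forces $\lvert\mathrm{Stab}(P)\rvert=O(1)$, matches the paper's $\Otilde(q^{n-6})\cdot O(q^3)\cdot\Otilde(1)=\Otilde(q^{n-3})$ count.
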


\begin{proof}
Let $m=n/2$. 
Every monic irreducible degree-$n$ homogeneous polynomial $f\in \FF_q[x,y]$
factors in $\FF_{q^2}[x,y]$ into a product $g g^{(q)}$ of a monic irreducible
degree-$m$ polynomial $g$ with its conjugate, and $g$ is unique up to
conjugation over~$\FF_q$. So what we would like to do is to enumerate the set of
degree-$m$ places of $\Pover{\FF_{q^2}}$ that are \emph{not} the lifts of
degree-$m$ places of $\Pover{\FF_{q}}$, up to the combined action of 
$\PGL_2(\FF_q)$ and $\Gal(\FF_{q^2}/\FF_q)$.

By our induction hypothesis, we already know how to get a complete set of unique
representatives for the action of $\PGL_2(\FF_{q^2})$ on the degree-$m$ places
$Q$ of $\Pover{\FF_{q^2}}$, so all we must do is figure out how to expand the
representative $Q$ for a given $\PGL_2(\FF_{q^2})$ orbit to get representatives 
for the $\PGL_2(\FF_{q})$ orbits without introducing duplicates, and all up to 
the action of $\Gal(\FF_{q^2}/\FF_q)$.

We will certainly obtain representatives for all of the $\PGL_2(\FF_{q})$ orbits
that are in the $\PGL_2(\FF_{q^2})$ orbit of a place $Q$ if we simply consider
the set of places $\{\Gamma(Q)\}$, for the $\Gamma$ listed in 
Proposition~\ref{P:CosetRepsPGL2}. But when will we get multiple
representatives?

We will get multiple representatives exactly when we are in the situation that 
$\Gamma_1(Q) = \Phi\Gamma_2(Q)$, where $\Phi$ is a nontrivial element of
$\PGL_2(\FF_q)$ and where $\Gamma_1$ and $\Gamma_2$ are elements of 
$\PGL_2(\FF_{q^2})$ such that $\Gamma_1\not\in \PGL_2(\FF_q)\cdot \Gamma_2$. We 
see that this can only happen when there is an element $\Psi$ of the 
$\PGL_2(\FF_{q^2})$ stabilizer of $Q$ such that $\Gamma_1 = \Phi\Gamma_2\Psi$; 
in other words, for a given $\Gamma_1$ and $\Gamma_2$ there will only be such an
element $\Phi$ when there is a $\Psi$ in the stabilizer of $Q$ such that 
$\Gamma_1$ and $\Gamma_2\Psi$ are in the same $\PGL_2(\FF_q)$ orbit. We can tell
whether this is the case by comparing $\CL(\Gamma_1)$ with $\CL(\Gamma_2\Psi)$.
In step~\ref{eventwo}\ref{eventwob}\ref{eventwobi}of Algorithm~\ref{A:even}, we 
compare $\CL(\Gamma)$ to $\CL(\Gamma\Psi)$ for all such~$\Psi$, and move on to
the next $\Gamma$ if $\CL(\Gamma)$ is not the smallest of these orbit labels.
This is enough to ensure that we are getting a complete set of unique 
representatives for the action of $\PGL_2(\FF_q)$ on the set of degree-$m$
places of $\Pover{\FF_{q^2}}$.

The only other way we may output more than one representative for the same orbit
is if we have two orbit representatives $P_1$ and $P_2$ from step~\ref{eventwo},
and two elements $\Gamma_1$ and $\Gamma_2$ from 
step~\ref{eventwo}\ref{eventwob}, such that the pairs $(P_1,\Gamma_1)$ and 
$(P_2,\Gamma_2)$ are not equal to one another, but such that the Galois
conjugate of $\Gamma_2(P_2)$ lies in the $\PGL_2(\FF_q)$ orbit of 
$\Gamma_1(P_1)$. This will be the case precisely when 
$\Gamma_1 = \Phi\Gamma_2^{(q)}\Psi$ for some $\Psi\in\PGL_2(\FF_{q^2})$ that 
takes $P_1$ to~$P_2^{(q)}$ and for some $\Phi\in\PGL_2(\FF_q)$. Note that this
can only happen if $\Cross(P_1) = \Cross(P_2^{(q)})$, and in fact does happen in
this case. We avoid this in two ways:

First, we only consider places $P$ such that $\Cross(P) \le \Cross(P^{(q)})$. By
doing so, we further restrict the bad circumstance above to the case where 
$P_1 = P_2$ and where $\Cross(P_1)$ lies in $\FF_q[x]$. This is accomplished by
step~\ref{eventwo}\ref{eventwoa}.

Next, if we are considering a representative $P$ for which $\Cross(P)$ lies in
$\FF_q[x]$, we want to avoid producing output for two different values of 
$\Gamma$, say $\Gamma_1$ and $\Gamma_2$, if $\Gamma_1 = \Phi\Gamma_2^{(q)}\Psi$
for some $\Psi$ that takes $P$ to~$P^{(q)}$.
Step~\ref{eventwo}\ref{eventwob}\ref{eventwobii} ensures that this does not
happen.

This shows that the algorithm does indeed output exactly one representative of
each $\PGL_2(\FF_q)$ orbit of the divisors of degree~$n$ on $\Pover{\FF_q}$. It
is clear that the space needed is $O(\log q)$. All we have left to do is to show
that the algorithm runs in time~$\Otilde(q^{n-3})$. 

There are $O((q^2)^{m-3}) = O(q^{n-6})$ orbit representatives $P$ to consider in
step~\ref{eventwo}, and the total time it takes to generate these 
representatives is $\Otilde(q^{n-6})$. For each such representative~$P$, we need
to consider the $O(q^3)$ possible values of $\Gamma$ from 
Proposition~\ref{P:CosetRepsPGL2}, and for each pair $(P,\Gamma)$ we do 
$\Otilde(1)$ work. All told, this results in $\Otilde(q^{n-3})$ work, as 
claimed.
\end{proof}

%%%%%%%%%%%%%%%%%%%%%%%%%%%%%%%%%%%%%%%%%%%%%%%%%%%%%%%%%%%%%%%%%%%%%%%%%%%%%%%%
%%%%%%%%%%%%%%%%%%%%%%%%%%%%%%%%%%%%%%%%%%%%%%%%%%%%%%%%%%%%%%%%%%%%%%%%%%%%%%%%
%%%%%%%%%%%%%%%%%%%%%%%%%%%%%%%%%%%%%%%%%%%%%%%%%%%%%%%%%%%%%%%%%%%%%%%%%%%%%%%%
\section{Enumerating hyperelliptic curves in Magma}
\label{S:implementation}

Using the ideas of this paper and of~\cite{Howe2024}, we have implemented
algorithms to enumerate hyperelliptic curves of genus 2, 3, and 4 over finite
fields of odd characteristic in Magma~\cite{BosmaCannonPlayoust1997}. 
Our implementations can be found at
\url{https://github.com/everetthowe/hyperelliptic}.

The results of this paper show that enumerating
hyperelliptic curves of genus $g$ over $\FF_q$ can be done in
$\Otilde(q^{2g-1})$  time using $O(\log q)$ space. Using so little
memory is important from a theoretical perspective, but for our implementations
we decided to allow ourselves $O(q^{g-1})$ space. 

Table~\ref{table} shows the time required to enumerate hyperelliptic 
curves of genus $2$, $3$, and $4$ over various finite fields $\FF_q$ using
the programs in \texttt{Hyperelliptic2.magma} (v.~2.00), 
\texttt{Hyperelliptic3.magma} (v.~2.01), and 
\texttt{Hyperelliptic4.magma} (v.~0.9), running in Magma V2.28-20 on
one core of an Apple M4 Max with 64GB of RAM. We ran our enumeration programs
without storing or outputting the resulting curves.

If our enumeration of genus-$g$ hyperelliptic curves over $\FF_q$ takes time~$t$,
we let $r_g(q)$ denote the quantity $10^5 \cdot t / q^{2g-1}$, so that for 
fixed $g$ the quantity $r_g(q)$ should be bounded above by a power of 
$\log q$. (The scaling factor $10^5$ was chosen for convenience.) Table~\ref{table} 
gives the values of $r_g(q)$ for the examples we compute.
Note the jump in $r_2(q)$ between $q=1021$ and
$q=1031$; this is almost certainly explained by the fact that Magma uses
Zech logarithms to speed up arithmetic in $\FF_{q^2}$ for $q\le 1024$ but not for 
larger~$q$~\cite[\S22.1.1]{BosmaCannonEtAl2025}.

\begin{table}
\caption{Time (in seconds) to enumerate hyperelliptic curves of genus $2$, $3$, and
4 over various finite fields $\FF_q$. The notation $r_g(q)$ is explained in the
text of the article, as is the description of the hardware and software used.}
\label{table}
\centering
\begin{tabular}{r r c  c  r r c  c  r r c}
\toprule
\multicolumn{3}{c}{Genus 2} && \multicolumn{3}{c}{Genus 3}&& \multicolumn{3}{c}{Genus 4}\\
\cmidrule(lr){1-3}  \cmidrule(lr){5-7}\cmidrule(lr){9-11}
$q$ &\multicolumn{1}{c}{Time (sec)} & $r_2(q)$ &&
$q$ &\multicolumn{1}{c}{Time (sec)} & $r_3(q)$ &&
$q$ &\multicolumn{1}{c}{Time (sec)} & $r_4(q)$\\
\cmidrule[\lightrulewidth]{1-3}  \cmidrule[\lightrulewidth]{5-7}\cmidrule[\lightrulewidth]{9-11}
$  31$& $     0.92$ &  $3.09$ && $ 7$ & $    0.36$ &  $2.14$ && $ 5$ & $    2.93$ & $3.75$ \\
$  61$& $     5.11$ &  $2.25$ && $11$ & $    2.27$ &  $1.41$ && $ 7$ & $   25.28$ & $3.07$ \\
$ 127$& $    34.72$ &  $1.69$ && $17$ & $   16.18$ &  $1.14$ && $ 9$ & $  160\zz$ & $3.36$ \\
$ 257$& $   285\zz$ &  $1.68$ && $23$ & $   60.27$ &  $0.93$ && $11$ & $  607\zz$ & $3.12$ \\
$ 509$& $  2002\zz$ &  $1.52$ && $31$ & $  241\zz$ &  $0.84$ && $13$ & $ 2061\zz$ & $3.29$ \\
$1021$& $ 16041\zz$ &  $1.51$ && $43$ & $ 1259\zz$ &  $0.86$ && $17$ & $11712\zz$ & $2.85$ \\
$1031$& $ 22002\zz$ &  $2.01$ && $61$ & $ 7545\zz$ &  $0.89$ && $19$ & $24898\zz$ & $2.79$ \\
$2039$& $174958\zz$ &  $2.06$ && $89$ & $50721\zz$ &  $0.91$ && $23$ & $91693\zz$ & $2.69$ \\
\bottomrule
\end{tabular}
\end{table}

Another way to enumerate hyperelliptic curves of genus~$2$ over $\FF_q$
in Magma using little memory would be to compute

\centerline{\texttt{Twists(HyperellipticCurveFromG2Invariants([a,b,c]))}}
\noindent
for every triple $(a,b,c)$ of elements of~$\FF_q$. For a given~$q$, we can
estimate the time this method would take by computing the expression above
for, say, $10^5$ random triples $(a,b,c)$, and then multiplying the time this
computation takes by $q^3/10^5$. These estimates show that 
for $q<1024$ our method is approximately $80$ times faster than this alternative,
while for $q > 1024$ our method seems to be about $60$ times faster. 

There is also an alternative way of enumerating hyperelliptic curves of
genus~$3$ over $\FF_q$ in Magma using little memory: One can apply the
Magma command

\centerline{\texttt{TwistedHyperellipticPolynomialsFromShiodaInvariants(S)}}
\noindent
to all Shioda invariants $S$ with nonzero discriminants, obtained by computing

\centerline{\texttt{ShiodaAlgebraicInvariants(v~:~ratsolve:=true)}}
\noindent
for every element $v$ of the $5$-dimensional weighted projective space $V$ over
$\FF_q$ with weights $[2,3,4,5,6,7]$ and discarding the Shioda invariants
with discriminant~$0$. We can estimate the time this method would take by sampling
as before. We find that for the $q\ge 31$ in Table~\ref{table},
our method for enumerating genus-$3$ hyperelliptic  curves is about $280$ times
faster than this alternative method.

We are not aware of any other implementations of algorithms to enumerate
genus-$4$ hyperelliptic curves over finite fields, so there are no timing
comparisons to make for the genus-$4$ case.

\bibliographystyle{hplaindoi}
\bibliography{places}

\end{document}